\numberwithin{equation}{section} 
\newtheorem{thm}{Theorem}[section]
\newtheorem{lem}[thm]{Lemma}
\theoremstyle{definition}
\newcommand{\eps}{\varepsilon}
\newcommand{\RN}{\mathbb{R}^N}
\newcommand{\R}{\mathbb{R}}
\newcommand{\f}[2]{\frac{#1}{#2}}
\newcommand{\na}{\nabla}
\newcommand{\norm}[2][]{\left\|#2\right\|_{#1}}
\newcommand{\Lom}[1]{L^{#1}(\Omega)}
\newcommand{\kl}[1]{\left(#1\right)}
\newcommand{\Ombar}{\overline{\Omega}}
\newcommand{\Om}{\Omega}
\newcommand{\tmax}{T_{\rm max}}
\newcommand{\lp}[2]{\|#2\|_{L^{#1}(\Omega)}}
\newcommand{\ol}{\overline}
\newcommand{\into}{\int_\Omega}
\newcommand{\HP}{\mathcal{P}}
\newcommand{\io}{\int_{\Omega}}
\newcommand{\nn}{\nonumber}
\newcommand{\calP}{\mathcal{P}}
\begin{document}
\begin{center}
    \LARGE{{\bf 
 Singular sensitivity in a Keller--Segel--fluid system    }}
\end{center}
\vspace{5pt}
\begin{center}
    Tobias Black\\
    \vspace{2pt}
    Universit\"at Paderborn, 
    Institut f\"ur Mathematik,\\ 
    Warburger Str.\ 100, 33098 Paderborn, Germany\\
    {\tt tblack@math.uni-paderborn.de}\\
    \vspace{12pt} 
    Johannes Lankeit\\
    \vspace{2pt}
    Universit\"at Paderborn, 
    Institut f\"ur Mathematik,\\ 
    Warburger Str.\ 100, 33098 Paderborn, Germany\\
    {\tt jlankeit@math.uni-paderborn.de}\\
    \vspace{12pt}
    Masaaki Mizukami\\
    \vspace{2pt}
    Department of Mathematics, 
    Tokyo University of Science\\
    1-3, Kagurazaka, Shinjuku-ku, Tokyo 162-8601, Japan\\
    {\tt masaaki.mizukami.math@gmail.com}\\
    \vspace{2pt}
\end{center}
\begin{center}    
    \small \today
\end{center}

\vspace{2pt}
\newenvironment{summary}
{\vspace{.5\baselineskip}\begin{list}{}{%
     \setlength{\baselineskip}{0.85\baselineskip}
     \setlength{\topsep}{0pt}
     \setlength{\leftmargin}{12mm}
     \setlength{\rightmargin}{12mm}
     \setlength{\listparindent}{0mm}
     \setlength{\itemindent}{\listparindent}
     \setlength{\parsep}{0pt}
     \item\relax}}{\end{list}\vspace{.5\baselineskip}}
\begin{summary}
{{\bf Abstract.}
In bounded smooth domains $\Omega\subset\mathbb{R}^N$, $N\in\{2,3\}$ considering the chemotaxis--fluid system 
\begin{align*}
 n_t + u\cdot \nabla n &= \Delta n - \chi \nabla \cdot(\frac{n}{c}\nabla c)\\
 c_t + u\cdot \nabla c &= \Delta c - c + n\\
 u_t + \kappa (u\cdot \nabla) u &= \Delta u + \nabla P + n\nabla \phi
\end{align*}
with singular sensitivity, we prove global existence of classical solutions for given $\phi\in C^2(\overline{\Omega})$, for $\kappa=0$ (Stokes-fluid) if $N=3$ and $\kappa\in\{0,1\}$ (Stokes- or Navier--Stokes fluid) if $N=2$ and under the condition that 
\[
 0<\chi<\sqrt{\frac{2}{N}}.
\]
\vspace*{1cm}\\
\textbf{MSC (2010): } 35A01 (primary); 35Q30; 92C17; 35Q92; 35A09 (secondary)\\
\textbf{Key words:} chemotaxis--fluid; singular sensitivity; global existence; Keller--Segel; Navier--Stokes
}
\end{summary}
\vspace{10pt}

\newpage
%
%
\section{Introduction}
\subsection{Chemotaxis--fluid models}
One of the assumptions underlying many mathematical models dealing with chemotaxis (i.e. the partially directed movement of cells in the presence of a chemical signal substance) is that -- apart from response to or production of the signal -- interaction with the environment can be neglected. For transport effects in liquid surroundings this may be justified in the case of single bacteria, but should no longer be assumed in presence of a larger number of cells, 
as experimentally shown in \cite{dombrowskietal}. Accordingly, inspired by the model suggested in \cite{tuval} and starting from the construction of weak solutions in \cite{Lorz,Liu_Lorz}, over the past few years, the analysis of chemotaxis--fluid models has begun to flourish, in which cells and signal substance are assumed to be transported by a fluid, whose motion is driven by gravitational forces induced by density differences between bacteria and the fluid, and whose velocity $u$ and pressure $P$ hence obey 
\[
         u_t + \kappa (u\cdot \nabla) u
        = \Delta u + \nabla P 
        + n \nabla\phi, 
        \qquad \nabla\cdot u = 0, 
\]
where $\phi\in C^2(\Omega)$ is a given gravitational potential and the parameter $\kappa\in\{0,1\}$ distinguishes between Stokes- and Navier--Stokes--governed fluid motion. The equations describing the time evolution of the population density $n$ and signal concentration $c$ differ from their more classical counterparts by a transport term only and are, in a general form, given by
\[
     \begin{cases}
         n_t + u\cdot\nabla n 
         = \Delta n - \nabla\cdot(n \chi(n,c) \nabla c), 
 \\[1mm]
         c_t + u\cdot\nabla c 
         = \Delta c + g(n,c). 
\end{cases} 
\]
Bacteria of the species \textit{Bacillus subtilis} that was used in the experiments in \cite{dombrowskietal,tuval} chemotactically respond to oxygen which they consume. A prototypical and popular choice of the functions in the above system is accordingly given by $g(n,c)=-nc$ and the classical $\chi(n,c)=\chi=\mathrm{const}$. This choice was, for convex domains, first covered by the existence results in \cite{win_CTNS_global_largedata}, also extension to nonconvex domains (\cite{jiang_wu_zheng}), results on convergence of solutions (\cite{win_arma}) and its rate (\cite{zhang_li_decay}) are available. The most advanced developments dealing with the full chemotaxis-\textit{Navier--Stokes} system in three-dimensional domains are constituted by the 
construction of global weak solutions in \cite{win_globalweaksolns} and the proof of their eventual regularization and convergence in \cite{win_transAMS}.
 Also model variants involving logistic source terms (\cite{braukhoff,lankeit_m3as}), several species (\cite{HKMY,CKM}), rotational sensitivity functions (\cite{win_gensolnstotensorvaluedsensitivities,caolan16,liu_wang}) and/or nonlinear diffusion (\cite{zhang_li,sachiko_positiondep}) have been studied and the interested reader can find additional information and references there or in \cite[Section 4.1]{BBTW}.

Chemotaxis-fluid systems describing a signal being produced by the cells themselves, as with the Keller--Segel type choice $g(n,c)=+n-c$, up to now have been studied in much fewer works. Global solutions were found to exist in 
a whole-space setting in the sense of mild solutions \cite{kozono_miura_sugiyama}, 
in systems with sensitivity functions that obey an estimate of the form $\chi(n,c)\le (1+n)^{-\alpha}$ for some $\alpha>0$ (\cite{winkler_wang_xiang,wang_xiang_16}), in the presence of additional logistic source terms (\cite{tao_win}), nonlinear diffusion (\cite{xie_wang_xiang}) or for sublinear signal production, that is, $g$ generalizing $g(n,c)=n^{\alpha}-c$ for some $\alpha\in(0,1)$ (\cite{black_sublinear}). 

\subsection{Singular sensitivity}
Another important class of chemotaxis models is formed by those with a singular sensitivity function, like $\chi(n,c)=\frac{\chi}{c}$ ($\chi$ being a positive constant). This form is suggested by the Weber--Fechner law of stimulus perception (see \cite{KS71_traveling}) and 
supported by experimental (\cite{kalinin_jiang_tu_wu}) and theoretical (\cite{xue}) evidence. Its characteristics are shaped by the chemotactic effects becoming very strong at fast-varying small signal concentrations -- and, indeed, for sufficiently large values of the coefficient $\chi$ (namely, $\chi>\frac{2N}{N-2}$), radial solutions undergoing blow-up within finite time have been found in the corresponding parabolic-elliptic fluid-free setting (\cite{nagai_senba}). On the other hand, for small values of $\chi$ and in the absence of fluid, classical solutions are known to exist globally in bounded domains of dimension two (\cite{biler99,nagai_senba_yoshida_ge}) or arbitrary dimension (\cite{win_singular}) and to be bounded (\cite{fujie}; for a generalization involving different diffusion coefficients see also \cite{zhao_zheng}), where the precise condition $\chi<\sqrt{\frac2N}$ imposed on the chemotactic coefficient in these works is known to be not strict: For two-dimensional domains, global existence 
and 
boundedness of classical solutions were shown for any slightly larger $\chi$ in \cite{lankeit_m2as}. There is still a range of values for $\chi$ where it is unknown whether blow-up can occur. Attempts at gaining insight here include the  consideration of system variants where either component is assumed to diffuse slowly (though not infinitely slowly) if compared to the other (\cite{fujie_senba_fastv_2d} and \cite{fujie_slowsignaldiffusion}) and the constructions of solutions within a weaker framework (\cite{win_singular,stinner_win,lankeit_winkler}) that at least cannot undergo blow-up in form of a persistent delta-type singularity.  

\subsection{The combination of fluid and singular sensitivity. Results of this work}
The study of chemotaxis systems accounting for both singular sensitivity functions and fluid has only just begun. In a signal-consumptive setting, the existence of weak solutions (\cite{wang_singular}) and their eventual smoothness (\cite{black_evsmooth}) have been shown as well as the existence of global classical  solutions under appropriate smallness conditions on the initial data (\cite{black_evsmooth}). 

To our knowledge, however, the corresponding system with production of the signal substance has not been treated yet and we wish to initiate these investigations with the present work and consider 
 \begin{equation}\label{cp}
     \begin{cases}
         n_t + u\cdot\nabla n 
         = \Delta n - \chi \nabla\cdot(\frac{n}{c}\nabla c), 
 \\[1mm]
         c_t + u\cdot\nabla c 
         = \Delta c - c + n, 
 \\[1mm]
        u_t + \kappa (u\cdot \nabla) u
        = \Delta u + \nabla P 
        + n \nabla\phi, 
        \quad \nabla\cdot u = 0 
        &\quad \text{in } \Omega\times(0,T)
     \end{cases}
 \end{equation}
in a bounded domain $\Omega\subset \R^N$, $N\in\{2,3\}$, with smooth boundary, and on the time interval $(0,T)$, $T\in(0,\infty]$, 
supplemented with the usual boundary conditions 
\begin{equation}\label{boundary}
         \partial_\nu n =
        \partial_\nu c = 0, \quad 
        u = 0 
        \qquad \text{in }\partial\Omega\times(0,T)
\end{equation}
and inital data
\begin{equation}\label{init}
         n(\cdot,0)=n_0,\ c(\cdot,0)=c_0,\ 
        u(\cdot,0)=u_0 
        \qquad \text{in } \Omega, 
\end{equation}
that satisfy 
 \begin{align}\label{condi;ini1}
   &0 \le n_{0} 
   \in C^0(\overline{\Omega}),
  \\ 
   &c_0 \in W^{1,\vartheta}(\Omega), 
   \quad \inf_{x\in \Omega}c_0(x) > 0,  
  \\ 
   &u_0 \in D(A^{\alpha}) 
 \end{align}
for some $\vartheta > N$, $\alpha \in \left(\frac{N}{4}, 1\right)$, with $A:=-\HP\Delta$ denoting the Stokes operator in $L^2_\sigma\!\left(\Omega\right)$ under homogeneous Dirichlet boundary conditions. Moreover, we will assume 
\begin{equation}\label{condi;ini2}
 \phi \in C^{2}(\overline{\Omega}). 
\end{equation}

We shall ask ourselves the question to what extent results and methods of the fluid-free case can be transferred to the present, more complex situation and will, indeed, recover the result on global existence for the same range of parameters as known from \cite{win_singular} for the fluid-free case: 

\begin{thm}\label{mainthm}
For $N\in\{2,3\}$ let $\Omega\subset \mathbb{R}^N$ 
 be a bounded domain with smooth boundary. 
 Suppose that $n_{0},c_0,u_0,\phi$ fulfil 
 \eqref{condi;ini1}{\rm --}\eqref{condi;ini2}
 and $\chi > 0$ satisfies
 \begin{align*}
 \chi < \sqrt{\frac 2N}. 
 \end{align*}
 Moreover assume that $\kappa\in\{0,1\}$ if $N=2$ and that $\kappa=0$ if $N=3$. 
 Then there exist functions 
    \begin{align*}
    n &\in C^0(\overline{\Omega}\times[0, \infty)) \cap 
        C^{2, 1}(\overline{\Omega}\times(0, \infty)), 
    \\
    c &\in C^0(\overline{\Omega}\times[0, \infty)) \cap 
        C^{2, 1}(\overline{\Omega}\times(0, \infty))
        \cap 
        L^\infty([0,\infty);W^{1,\vartheta}(\Omega)), 
    \\
    u &\in C^0(\overline{\Omega}\times[0, \infty)) \cap 
        C^{2, 1}(\overline{\Omega}\times(0, \infty)), 
    \\
    P &\in C^{1, 0}(\overline{\Omega}\times(0, \infty)) 
    \end{align*}
 which solve \eqref{cp}{\rm --}\eqref{init} 
 classically 
 in $\Omega\times [0,\infty)$.
 Moreover, the solution $(n, c, u,P)$ of 
 \eqref{cp}{\rm --}\eqref{init} 
 is unique, up to addition of 
 spatially constant functions to $P$. 
\end{thm}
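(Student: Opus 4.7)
My plan is to follow the familiar entropy-dissipation roadmap developed for singular-sensitivity Keller--Segel systems in the fluid-free setting and to upgrade each step so as to accommodate the additional fluid coupling. Throughout, I work on a solution defined on a maximal interval $[0,\tmax)$ obtained from a standard contraction-mapping local existence result in a space like $C^0(\Ombar) \times W^{1,\vartheta}(\Omega) \times D(A^\alpha)$, which simultaneously furnishes the familiar extensibility criterion: if $\tmax < \infty$ then $\|n(\cdot,t)\|_{L^\infty(\Omega)} + \|c(\cdot,t)\|_{W^{1,\vartheta}(\Omega)} + \|A^\alpha u(\cdot,t)\|_{L^2(\Omega)}$ must blow up as $t \nearrow \tmax$. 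Two preliminary estimates come practically for free: mass conservation $\io n(\cdot,t) = \io n_0$ (unaffected by the divergence-free drift) and the pointwise lower bound $c(x,t) \geq \eta\mathrm{e}^{-t}$, with $\eta := \inf_\Omega c_0 > 0$, obtained by the parabolic comparison principle for the second equation using $n \geq 0$; the latter in particular legitimises the substitution $v := \ln c$.

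The heart of the argument is the coupled differential inequality satisfied by a Lyapunov-type functional of the form
$$\mathcal{F}(t) := \io n \ln n + \frac{A}{2}\io \frac{|\nabla c|^2}{c^2}, \qquad A>0 \text{ to be chosen}.$$
Testing the first equation by $\ln n + 1$ and using $\nabla\cdot u = 0$, the fluid convection drops out entirely, leaving
$$\frac{d}{dt}\io n\ln n = -\io \frac{|\nabla n|^2}{n} + \chi \io \frac{\nabla n\cdot\nabla c}{c}.$$
For the second summand, I would use the equation $v_t + u\cdot\nabla v = \Delta v + |\nabla v|^2 - 1 + n\mathrm{e}^{-v}$ satisfied by $v = \ln c$, tested against $-\Delta v$. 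Thanks again to $\nabla\cdot u = 0$ and to $\partial_\nu v = 0$, all genuinely transport-type contributions collapse and the only fluid-induced remainder is a term of the schematic form $-\io (\nabla v)^\top (\nabla u)\nabla v$. Combining both identities, estimating the cross term $\chi\io \nabla n\cdot\nabla c/c$ by Young's inequality, and then controlling the resulting quantity $\io n|\nabla v|^2$ by a Gagliardo--Nirenberg chain involving $\io |\nabla n|^2/n$ and $\io |D^2 v|^2$, one obtains a closed differential inequality for $\mathcal{F}$. The condition $\chi < \sqrt{2/N}$ enters exactly as the smallness threshold which makes the quadratic form governing the combined dissipation positive-definite after the cross term is absorbed --- the same mechanism as in \cite{win_singular}.

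The next block of work is the control of the fluid. For the Stokes choice $\kappa = 0$ I would exploit the Duhamel representation $u(t) = \mathrm{e}^{-tA}u_0 + \int_0^t \mathrm{e}^{-(t-s)A}\calP(n(\cdot,s)\nabla\phi)\,ds$ together with the classical smoothing estimates $\|A^\beta \mathrm{e}^{-tA}\|_{L^2\to L^2} \leq Ct^{-\beta}$ and the embedding $D(A^\beta) \hookrightarrow W^{1,q}$ for suitable $\beta$ and $q$. In this way, an $L^p$-bound on $n$ (obtained from the entropy bound by a Moser-type iteration) translates into a bound for $\nabla u$ in a Lebesgue space compatible with the Gagliardo--Nirenberg chain above, which lets me dominate the residual $\io (\nabla v)^\top(\nabla u)\nabla v$ by a further Young-type argument and absorb it into the dissipation side of $\mathcal{F}'$. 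In the two-dimensional Navier--Stokes setting $\kappa = 1$ the extra convective nonlinearity $(u\cdot\nabla)u$ is tractable via the well-known planar energy identity. Once $\mathcal{F}$ is bounded, a standard bootstrap through the semigroup representations of all three equations, together with Moser iteration for $n$, upgrades the bound to $L^\infty_{\mathrm{loc}}([0,\tmax))$-control of $\|n\|_{L^\infty}$, $\|c\|_{W^{1,\vartheta}}$ and $\|A^\alpha u\|_{L^2}$, contradicting $\tmax < \infty$. Uniqueness is then handled by an energy estimate on the difference of two solutions, using the lower bound on $c$ to make sense of the singular factor $1/c$ and closing with Gr\"onwall.

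The hardest step, in my view, will be the closure of the coupled entropy inequality. One has to simultaneously (i) pin down enough quantitative structure to see the threshold $\chi < \sqrt{2/N}$ emerge as the negative-definiteness condition of a quadratic form, exactly as in the fluid-free analysis, and (ii) show that the new fluid-generated term $\io (\nabla v)^\top(\nabla u)\nabla v$ can in fact be handled by an a priori bound on $\nabla u$ in an integrability class that fits the same Gagliardo--Nirenberg chain. Avoiding circular dependencies --- the bound on $u$ needs a bound on $n$, which in turn needs control over $u$ --- will require a careful ordering of the arguments and a judicious choice of exponents, and it is here that I expect the bulk of the technical work to sit.
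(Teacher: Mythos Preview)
Your proposal takes a genuinely different route from the paper, and the difference sits exactly where your acknowledged circularity lies. The paper does \emph{not} use the logarithmic entropy $\int_\Omega n\ln n+\tfrac{A}{2}\int_\Omega|\nabla c|^2/c^2$; it works instead with the zero-order functional $\int_\Omega n^{p}c^{-r}$ for $p\in(1,\chi^{-2})$ and suitable $r$ (Lemma~\ref{lem;intnpc-r}). The decisive advantage of this choice is that the integrand involves no derivatives of $n$ or $c$, so the divergence-free transport contributes exactly $\int_\Omega u\cdot\nabla(n^{p}c^{-r})=0$ and the fluid disappears \emph{entirely} from the key differential inequality. The admissible range for $r$ is nonempty precisely when $p<\chi^{-2}$, and since one needs $p>N/2$ to proceed, the threshold $\chi<\sqrt{2/N}$ falls out directly. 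This yields an $L^p$-bound on $n$ for some $p>N/2$ \emph{before} any fluid estimate, which is then fed into the Stokes/Navier--Stokes semigroup (Lemmata~\ref{lem;L2estiforu}--\ref{lem;esti;u;Navier}); only afterwards are $\nabla c$ and finally $n\in L^\infty$ controlled. There is no circularity to untangle.

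Your functional, by contrast, contains $\int_\Omega|\nabla v|^2$, whose evolution under testing by $-\Delta v$ unavoidably leaves the fluid remnant $\int_\Omega(\nabla v)^{\top}(\nabla u)\nabla v$ that you identify. Controlling it requires a bound on $\nabla u$, but already the basic energy estimate for $u$ needs $n\in L^{p}$ for some $p>1$ (in 2D) or $p>6/5$ (in 3D), which $\int_\Omega n\ln n$ alone does not provide. You flag this loop but do not close it. Moreover, the Gagliardo--Nirenberg chain you sketch for $\int_\Omega n|\nabla v|^2$ typically produces factors of $\|\nabla v\|_{L^2}$ (powers of $\mathcal{F}$ itself) multiplying the dissipation, giving only small-data or local-in-time control rather than a clean smallness condition on $\chi$; I do not see how the threshold $\sqrt{2/N}$ would emerge from your quadratic form. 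The paper's $\int_\Omega n^{p}c^{-r}$ is precisely the device that sidesteps both issues, and I would recommend switching to it.
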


\subsection{Technical challenges and plan of the paper}
While the main idea of deriving (local-in-time-)boundedness from the functional 
\[
 \io n^pc^{-r} 
\]
for suitable values of $p$ and $r$ (employed in \cite{win_singular} as well as \cite{fujie}) remains applicable (see Lemma \ref{lem;intnpc-r}), the presence of the transport terms poses obstacles in several respects: Firstly, the starting point for the iteration procedure underlying existence and boundedness proofs in \cite{win_singular} and \cite{fujie}, respectively, was to consider the equation for $c$ as inhomogeneous heat equation and to use the apparent bound on $\norm[\Lom1]{n(\cdot,t)}$ together with semigroup estimates. Now, however, the source term is not $n$ anymore, but $n-u\cdot\na c$, and a priori little is known about bounds for $u$ or even $\na c$, so that the reasoning of \cite[Lemma 2.4]{fujie} or \cite[Lemma 2.4]{win_singular} cannot be used here. We will hence resort to a differential inequality for $\io c^q$ for $q\in[1,\infty)$ (Lemma \ref{lem;id;DIforcq}) and estimate the production term arising therein by means of 
\[
 \io nc^{q-1} \le \kl{\io n^pc^{-r}}^{\f1p} \kl{\io c^{\f{pq-p+r}{p-1}}}^{\f{p-1}p}, 
\]
where the first factor can be controlled according to the previously obtained bound of $\io n^pc^{-r}$ and the second is susceptible to an application of the Gagliardo--Nirenberg inequality and subsequent absorption by the diffusion term (see proof of Lemma \ref{lem;q;c}).  
This will enable us to transform the information on $\io n^pc^{-r}$ into a boundedness assertion on $\norm[\Lom p]{n}$ for some suitably large $p$ (Lemma \ref{lem;Lp;n}). Having derived (time-local) $\Lom{\infty}$-bounds for the fluid velocity field in the Stokes- and Navier--Stokes settings in Sections \ref{sec:stokes2d3d} and \ref{sec:navierstokes2d}, respectively, in Section \ref{sec:boundednessforn} we can, mainly leaning on semigroup estimates, conclude global existence of solutions. 

The second regard in which the fluid poses an obstacle concerns the crucial uniform-in-time lower bound for $c$ constituting the core of the boundedness proof in \cite{fujie}. Again viewing the equation for $c$ as inhomogeneous heat equation with source term $n$, in 
\cite[Lemma 2.2]{fujie} estimates of the heat kernel provide this uniform positive lower bound on $c$. These estimates rely on the nonnegativity of $n$, whereas no information about the sign of $u\cdot \na c$ (and hence of $n-u\cdot\na c$) seems available. That we are hence lacking the corresponding time-global positive lower bound of $c$ 
is the main reason why we have to leave open the question of boundedness of the solutions obtained in Theorem \ref{mainthm}.

\section{Basic properties and estimates}

%
%
%
%
We first recall a local existence result. 
We also give some lower estimate for $c$, 
which plays an important role to in 
avoiding the difficulty of the 
singular sensitivity function. 

\begin{lem}\label{lem;local existence}
Let $N\in\{2,3\}$, $\chi>0$, $\kappa\in\{0,1\}$, $\vartheta>N$, $\alpha\in(\frac{N}{4},1)$ and let $\Omega\subset \RN$ be a 
bounded domain with smooth boundary. 
Assume that $n_0,c_0,u_0,\phi$ satisfy 
\eqref{condi;ini1}{\rm --}\eqref{condi;ini2}. 
Then there exist $\tmax\in (0,\infty]$ and 
a classical solution $(n,c,u,P)$ of \eqref{cp}--\eqref{init}
in $\Omega\times (0,\tmax)$ such that 
\begin{align*}
  &n\in C^0 (\overline{\Omega}\times [0,\tmax))\cap 
  C^{2,1}(\overline{\Omega}\times (0,\tmax)), 
  \\ 
  &c\in C^0 (\overline{\Omega}\times [0,\tmax))\cap 
  C^{2,1}(\overline{\Omega}\times (0,\tmax))
  \cap L^\infty_{\rm loc}([0,\tmax);W^{1,\vartheta}(\Omega)),
  \\ 
  &u\in C^0 (\overline{\Omega}\times [0,\tmax))\cap 
  C^{2,1}(\overline{\Omega}\times (0,\tmax)),\\
  &P \in C^{1, 0}(\overline{\Omega}\times(0, \tmax))  
\end{align*}
and 
\begin{align*}
  \tmax=\infty \quad \mbox{or}\quad 
  \lim_{t\to \tmax}
  \left(
  \lp{\infty}{n(\cdot,t)}
  + \|c(\cdot,t)\|_{W^{1,\vartheta}(\Omega)}
  +\lp{2}{A^\alpha u(\cdot,t)}
  \right)=\infty. 
\end{align*}
Also, the solution is unique, 
up to addition of spatially constant function to $P$ and, moreover, has the properties 
\begin{align}\notag
  &n(x,t)\ge 0 
  \quad \mbox{and}
  \\\label{ineq;lower;c} 
  &c(x,t)\ge 
  \left(\min_{x\in\ol{\Omega}}c_0(x)\right)e^{-t}
 \quad \mbox{for all}\ t\in (0,\tmax). 
\end{align}
\end{lem}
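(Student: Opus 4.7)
The plan is to construct the solution by a standard Banach fixed-point argument in the spirit of related works on chemotaxis--fluid models (see e.g.\ \cite{winkler_wang_xiang}). The feature of the initial data that makes this possible despite the singular sensitivity is the pointwise lower bound $\inf_{x\in\Omega}c_0(x)>0$: it allows us to work in a closed subset of a Banach space on which every candidate $\hat c$ satisfies $\hat c\ge\tfrac12\inf c_0$, so that the coefficient $\chi\na\hat c/\hat c$ appearing in the $n$--equation stays bounded in a tractable norm.

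Concretely, for small $T>0$ I would fix a Banach space $X_T$ of triples $(\hat n,\hat c,\hat u)$ with, say, $\hat n\in C^0(\Ombar\times[0,T])$, $\hat c\in C^0([0,T];W^{1,\vartheta}(\Omega))$ and $\hat u\in C^0([0,T];D(A^{\alpha}))$, together with a closed convex subset $S_T\subset X_T$ encoding radius bounds and the lower bound $\hat c\ge\tfrac12\inf c_0$. The map $\Phi\colon(\hat n,\hat c,\hat u)\mapsto (n,c,u)$ is then defined by solving the three decoupled linear problems: the (Navier--)Stokes equation for $u$ with forcing $\hat n\na\phi$ via the Stokes semigroup and standard fractional-power estimates (which, for $\alpha>\tfrac{N}{4}$, embed $D(A^{\alpha})\hookrightarrow L^{\infty}(\Om)$); the inhomogeneous heat equation for $c$ with right-hand side $-c+\hat n-\hat u\cdot\na\hat c$ by heat semigroup and $W^{1,\vartheta}$--regularity estimates; and the linear parabolic equation for $n$ with drift $\hat u$ and sensitivity coefficient $\chi\na\hat c/\hat c$, controlled by the lower bound on $\hat c$. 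For $T$ sufficiently small, $\Phi$ maps $S_T$ into itself and is a contraction, producing a fixed point; parabolic regularity theory then upgrades this mild solution to the claimed classical regularity, and the pressure $P$ is recovered via the Leray decomposition. Uniqueness, up to spatial constants in $P$, follows by applying the same estimates to the difference of two solutions.

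The qualitative properties stated in the lemma follow from maximum-principle arguments. Once $c$ and $u$ are fixed, the $n$--equation is linear and uniformly parabolic with homogeneous Neumann data and nonnegative initial data, so $n\ge 0$. For the lower bound on $c$, I would compare with the spatially constant subsolution $\underline c(t):=(\min_{\Ombar}c_0)\,e^{-t}$: setting $w:=c-\underline c$, one verifies
\[
 w_t+u\cdot\na w-\Delta w+w = n\ge 0
\]
on $\Om\times(0,\tmax)$, with $\pa_\nu w=0$ on $\pa\Om$ and $w(\cdot,0)\ge 0$, so that the parabolic maximum principle yields $w\ge 0$, i.e.\ \eqref{ineq;lower;c}. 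The extensibility criterion is obtained along standard lines: if the quantity $\lp{\infty}{n(\cdot,t)}+\|c(\cdot,t)\|_{W^{1,\vartheta}(\Om)}+\lp{2}{A^{\alpha}u(\cdot,t)}$ stayed bounded as $t\nearrow\tmax$, then the constants entering the fixed-point construction would remain uniformly controlled, allowing us to restart the iteration at a time close to $\tmax$ and extend the solution beyond $\tmax$, in contradiction to maximality.

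The central technical obstacle throughout is that the singular term $\chi\na c/c$ is only manageable so long as $c$ stays bounded away from zero. This is why the pointwise lower bound on $c$ must be enforced as part of the functional setting $S_T$ rather than obtained a posteriori; it is precisely the assumption $\inf c_0>0$, in combination with the smallness of $T$, that closes the argument on a short time interval. Consistency of this artificial bound with the genuine solution is ensured by the comparison established in the previous paragraph.
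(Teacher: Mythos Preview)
Your proposal is correct and follows essentially the same route as the paper: a Banach fixed-point argument in a closed bounded set in $L^\infty((0,T);C^0(\overline{\Omega})\times W^{1,\vartheta}(\Omega)\times D(A^\alpha))$, with the singular sensitivity handled precisely by building the lower bound on $\hat c$ into the definition of the closed set (this is the adaptation the paper points to via \cite{lankeit_m2as}), followed by parabolic regularity and the comparison principle for the sign properties. The only point to watch is that in the Navier--Stokes case $\kappa=1$ the convective term $(u\cdot\nabla)u$ must also be frozen (e.g.\ as $(\hat u\cdot\nabla)\hat u$) or otherwise absorbed into the contraction estimate; the embedding $D(A^\alpha)\hookrightarrow L^\infty(\Omega)$ you mention provides exactly the control needed for this.
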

\begin{proof}
With adaptions akin to those used in \cite[Thm. 2.3 i)]{lankeit_m2as} to deal with the singular sensitivity, the usual reasoning (see \cite[Thm. 3.1]{BBTW} and \cite[L. 2.1]{win_CTNS_global_largedata}) based on Banach's fixed point theorem applied in a closed bounded set in $L^\infty((0,T);C^0(\Ombar)\times W^{1,\vartheta}(\Om)\times D(A^\alpha))$ for suitably small $T>0$, followed by regularity arguments, proves this local existence and uniqueness result. The estimates in \eqref{ineq;lower;c} immediately follow from the comparison principle.
\end{proof}

In the following, we will always assume $N$, $\Om$, $\kappa$, $\chi$, $n_0$, $c_0$, $u_0$, $\phi$, $\vartheta$, $\alpha$ to satisfy the conditions of Lemma \ref{lem;local existence} and to be fixed. By $(n,c,u,P)$ we will denote the corresponding solution to \eqref{cp}--\eqref{init} given by Lemma \ref{lem;local existence} and by $\tmax$ its maximal existence time. 

%
%
%
%

Our study of these solutions begins with the following simple $\Lom1$-information: 

\begin{lem}\label{lem;L1esti;nc}
For all $t\in(0,\tmax)$ the mass identity 
  \begin{align}\label{id;mass}
  \into n(\cdot,t) = \into n_0
  \end{align}
is satisfied.   Moreover, there exists $C>0$ such that 
  \begin{align*} 
  \lp{1}{c(\cdot,t)}\leq C 
  \quad 
  \mbox{for all}\ t\in(0,\tmax). 
  \end{align*} 
\end{lem}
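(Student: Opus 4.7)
The proof plan is to integrate the first two equations of \eqref{cp} over $\Omega$ and exploit the boundary conditions \eqref{boundary} together with the solenoidality of $u$.

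For the mass identity \eqref{id;mass}, I would integrate the equation $n_t + u\cdot \nabla n = \Delta n - \chi \nabla \cdot(\tfrac{n}{c}\nabla c)$ over $\Omega$. The diffusion term vanishes by the Neumann condition $\partial_\nu n = 0$, and for the same reason the chemotactic flux term contributes $\int_{\partial\Omega} \tfrac{n}{c}\partial_\nu c = 0$. The convective term can be rewritten as $\int_\Omega u\cdot \nabla n = \int_{\partial\Omega}(u\cdot\nu)\,n - \int_\Omega n\,\nabla\cdot u$, both of which vanish because $u\equiv 0$ on $\partial\Omega$ and $\nabla\cdot u=0$. Hence $\tfrac{d}{dt}\io n = 0$, and integration in time yields \eqref{id;mass}.

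For the second assertion I would apply the same reasoning to the $c$-equation. Integrating $c_t + u\cdot \nabla c = \Delta c - c + n$ over $\Omega$, the Laplacian drops out by $\partial_\nu c = 0$ and the transport term vanishes exactly as above. Together with \eqref{id;mass}, this gives the linear ODI
\[
 \frac{d}{dt}\io c(\cdot,t) + \io c(\cdot,t) = \io n_0 \qquad \text{for all } t\in(0,\tmax).
\]
A straightforward Gronwall/ODE comparison then yields
\[
 \io c(\cdot,t) \le \max\!\left\{\io c_0,\ \io n_0\right\} =: C \qquad \text{for all } t\in(0,\tmax),
\]
which is the claimed $L^1$-bound.

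Neither step presents a substantive obstacle; the only minor point worth verifying carefully is that the transport term $\int_\Omega u\cdot\nabla(\cdot)$ really vanishes, which requires both the no-slip condition $u|_{\partial\Omega}=0$ and $\nabla\cdot u=0$ supplied by Lemma \ref{lem;local existence}. The regularity $n,c \in C^{2,1}(\overline\Omega\times (0,\tmax))$ from that same lemma justifies all the integrations by parts, and since \eqref{ineq;lower;c} guarantees $c>0$, the chemotactic flux is well-defined.
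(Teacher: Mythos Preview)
Your proof is correct and follows precisely the approach indicated in the paper's (very terse) proof, which simply says the result follows from integrating the first two equations of \eqref{cp} using $\nabla\cdot u=0$ and the boundary conditions \eqref{boundary}. You have merely supplied the details that the paper omits, including the explicit ODE for $\io c$ and the resulting bound.
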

\begin{proof}
This results from integration of the first and second equation of \eqref{cp} due to $\na\cdot u=0$ in $\Om\times(0,\tmax)$ and \eqref{boundary}.
\end{proof}

%
%
%
%
As in related situations (see \cite{win_singular,fujie}, but also \cite{lankeit_winkler}), the key for establishing estimates significantly going beyond those of Lemma \ref{lem;L1esti;nc} lies in the following: 
\begin{lem}\label{lem;intnpc-r}
 If $\chi<1$, $p\in (1,\frac{1}{\chi^2})$ and $r\in I_p$, 
 where 
 \begin{align*}
   I_p:=\left(
        \frac{p-1}{2}\left(1-\sqrt{1-p\chi^2}\right),
        \frac{p-1}{2}\left(1+\sqrt{1-p\chi^2}\right)
        \right),
 \end{align*}
 then there are $C_1>0$, $C_2>0$ such that 
 \begin{align}\label{eq:ionpcrbd}
   \into n(\cdot,t)^p c(\cdot,t)^{-r} 
   \le C_1 e^{C_2t} 
   \quad 
   \mbox{for all}\ t\in(0,\tmax) 
 \end{align}
 and, moreover, for any finite $T\in(0,\tmax]$ there exists $C(T)>0$ such that 
 \begin{align}\label{timeint;nablanc}
 \int_0^{T}\into 
 |\nabla (n^{\frac{p}{2}}c^{-\frac{r}{2}})|^2 
 \le C(T). 
\end{align}
\end{lem}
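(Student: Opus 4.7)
The plan is to differentiate $\io n^p c^{-r}$ in time, exploit incompressibility to cancel the fluid contributions, and choose $r$ so that the remaining diffusive and cross-diffusive terms combine into a pointwise nonpositive quadratic form. Expanding $\frac{d}{dt}\io n^p c^{-r} = p\io n^{p-1}c^{-r} n_t - r\io n^p c^{-r-1} c_t$ via \eqref{cp}, the advective pieces collect as
\begin{align*}
-p\io n^{p-1}c^{-r}\, u\cdot\nabla n + r\io n^p c^{-r-1}\, u\cdot\nabla c = -\io u\cdot \nabla(n^p c^{-r}),
\end{align*}
which vanishes after integration by parts thanks to $\nabla\cdot u = 0$ in $\Omega$ and $u|_{\partial\Omega}=0$. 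The analysis therefore reduces to the fluid-free computation: integrating by parts in the remaining diffusion and taxis contributions (boundary terms disappearing because of the homogeneous Neumann conditions on $n$ and $c$) yields
\begin{align*}
\frac{d}{dt}\io n^p c^{-r}
&= -\,p(p-1)\io n^{p-2} c^{-r}|\nabla n|^2 + \bigl[2pr + \chi p(p-1)\bigr]\io n^{p-1} c^{-r-1}\nabla n\cdot\nabla c \\
&\quad - \bigl[\chi pr + r(r+1)\bigr]\io n^p c^{-r-2}|\nabla c|^2 + r\io n^p c^{-r} - r\io n^{p+1} c^{-r-1}.
\end{align*}

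Writing $\xi := n^{\frac{p}{2}-1}c^{-\frac{r}{2}}\nabla n$ and $\zeta := n^{\frac{p}{2}}c^{-\frac{r}{2}-1}\nabla c$, the first three terms form the $\Omega$-integral of the quadratic form
\begin{align*}
Q(\xi,\zeta) := -p(p-1)|\xi|^2 + \bigl[2pr+\chi p(p-1)\bigr]\,\xi\cdot\zeta - \bigl[\chi pr+r(r+1)\bigr]|\zeta|^2,
\end{align*}
whose discriminant, divided by the positive factor $4p$, reduces to $r^2-(p-1)r+\frac{\chi^2 p(p-1)^2}{4}$. The roots of this quadratic in $r$ are precisely the endpoints of $I_p$, so for every $r\in I_p$ the discriminant is strictly negative and, combined with the negative leading coefficient $-p(p-1)$, $Q$ is uniformly negative definite. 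Using the pointwise identity
\begin{align*}
\bigl|\nabla(n^{\frac{p}{2}}c^{-\frac{r}{2}})\bigr|^2 = \tfrac{p^2}{4}|\xi|^2 - \tfrac{pr}{2}\,\xi\cdot\zeta + \tfrac{r^2}{4}|\zeta|^2 \le C_{p,r}\bigl(|\xi|^2+|\zeta|^2\bigr),
\end{align*}
I thus find some $\eta>0$ for which the first three terms are bounded above by $-\eta\io|\nabla(n^{\frac{p}{2}}c^{-\frac{r}{2}})|^2$.

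Since $r>0$ and $n^{p+1}c^{-r-1}\ge 0$, discarding the nonpositive contribution $-r\io n^{p+1}c^{-r-1}$ leaves
\begin{align*}
\frac{d}{dt}\io n^p c^{-r} + \eta \io \bigl|\nabla(n^{\frac{p}{2}}c^{-\frac{r}{2}})\bigr|^2 \le r\io n^p c^{-r}.
\end{align*}
Gronwall's inequality immediately produces \eqref{eq:ionpcrbd} with $C_2 = r$ and $C_1 = \io n_0^p c_0^{-r}$, finite thanks to \eqref{condi;ini1} together with $\inf_{x\in\Omega} c_0(x)>0$, while integrating the same differential inequality over $(0,T)$ delivers \eqref{timeint;nablanc}. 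The main delicate point is the identification of the discriminant condition with the explicit range $I_p$ and its translation into a lower bound for the targeted Dirichlet-type quantity; compared to the fluid-free setting of \cite{win_singular,fujie}, the only additional ingredient needed here is the divergence-free cancellation of the transport terms, after which the admissible range of $(p,r)$ is unchanged.
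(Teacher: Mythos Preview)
Your proof is correct and follows essentially the same approach as the paper: compute $\frac{d}{dt}\io n^p c^{-r}$, observe that the transport contribution $-\io u\cdot\nabla(n^p c^{-r})$ vanishes by incompressibility, and then exploit that the condition $r\in I_p$ is exactly the negativity condition for the discriminant $4p\bigl(r^2-(p-1)r+\frac{\chi^2 p(p-1)^2}{4}\bigr)$ of the quadratic form in $(\nabla n,\nabla c)$. The only cosmetic difference is that the paper absorbs the cross term via Young's inequality to leave $\eps\io n^{p-2}c^{-r}|\nabla n|^2+\eps\io n^p c^{-r-2}|\nabla c|^2$ on the left, whereas you phrase this as uniform negative definiteness of $Q$ and then dominate $|\nabla(n^{p/2}c^{-r/2})|^2$ by $C_{p,r}(|\xi|^2+|\zeta|^2)$; both routes lead to the same differential inequality and the same conclusions.
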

\begin{proof} 
Using \eqref{cp} and integration by parts, we have that 
\begin{align}\notag
  \frac d{dt} \into n^p c^{-r} 
  &= -p(p-1)\into n^{p-2}c^{-r} |\nabla n|^2 
     + (2pr +\chi p(p-1))
        \into n^{p-1}c^{-r-1}\nabla n\cdot \nabla c 
\\\notag
  &\quad\, -\left(\chi pr +r(r+1)\right)
        \into n^p c^{-r-2}|\nabla c|^2 
        + r\into n^pc^{-r} - r\into n^{p+1}c^{-r-1}  
\\\label{eq;ddtintupcr}
  &\quad\, -\into u\cdot \nabla \left(n^pc^{-r}\right) \quad \text{on } (0,\tmax).
\end{align}
The condition $r\in I_p$ entails that 
\[
 0 > 4p\left(r^2-(p-1)r+\f{p(p-1)^2\chi^2}4\right) = \kl{2pr+\chi p(p-1)}^2- 4p(p-1)(\chi p r +r (r+1))
\]
and hence 
\begin{equation*}
 (2pr+\chi p (p-1))^2 < 4p(p-1)(\chi pr+r(r+1)). 
\end{equation*}
Therefore, we can apply Young's inequality to the summand $(2pr+\chi p(p-1))\io n^{p-1} c^{-r} \na n\cdot \na c$ in \eqref{eq;ddtintupcr} and with some small $\eps>0$ obtain 
\begin{equation}\label{eq:ddtionpcmr}
 \f{d}{dt} \io  n^pc^{-r} + \eps \io n^{p-2}c^{-r} |\na n|^2 + \eps \io n^p c^{-r-2} |\na c|^2 \le r \io n^p c^{-r} \quad \text{in } (0,\tmax), 
\end{equation}
where we already have taken into account that $\io u\cdot \na(n^p c^{-r})=0$ and $-r\io n^{p+1}c^{-r-1}\le 0$ in $(0,\tmax)$. Integration of \eqref{eq:ddtionpcmr} shows \eqref{eq:ionpcrbd} and, since 
\begin{align*}
  \frac{p^2}{2}n^{p-2}c^{-r} |\nabla n|^2 
  + \frac{r^2}{2}n^p c^{-r-2}|\nabla c|^2
  \geq  |\nabla (n^{\frac p2}c^{-\frac r2})|^2 \qquad \text{in } \Om\times(0,\tmax), 
\end{align*}
also results in \eqref{timeint;nablanc}.

\end{proof}

%
%
%
%
In order to extract helpful boundedness information concerning $n$ from \eqref{eq:ionpcrbd}, we first require estimates for higher norms of $c$, whose source is the following lemma: 

%
\begin{lem}\label{lem;id;DIforcq}
For all $q\geq 1$, 
\begin{align}\label{id;DIforcq}
  \frac 1q \frac{d}{dt}\into c^q 
  = -(q-1)\into c^{q-2}|\nabla c|^2 
    -\into c^q  
    + \into nc^{q-1}
\end{align}
holds on $(0,\tmax)$. 
\end{lem}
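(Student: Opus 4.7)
The plan is to derive \eqref{id;DIforcq} by testing the second equation of \eqref{cp} against $c^{q-1}$ and integrating over $\Omega$, which is a fairly routine computation but requires some care with the convective term and with the regularity needed for the testing to be admissible.

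First I would note that by Lemma \ref{lem;local existence}, $c \in C^{2,1}(\overline{\Omega}\times(0,\tmax))$ and $c \geq (\min_{\overline{\Omega}} c_0) e^{-t} > 0$ on $(0,\tmax)$, so for any $q \geq 1$ the function $c^{q-1}$ is a permissible test function (in particular, for $q \in [1,2)$ the positive lower bound ensures integrability of $c^{q-2}|\nabla c|^2$). Multiplying the equation $c_t + u\cdot\nabla c = \Delta c - c + n$ by $c^{q-1}$ and integrating over $\Omega$ gives
\begin{equation*}
\into c^{q-1} c_t + \into c^{q-1}\,u\cdot\nabla c = \into c^{q-1}\Delta c - \into c^q + \into n\,c^{q-1}.
\end{equation*}

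Next I would simplify each term. The time-derivative term rewrites as $\into c^{q-1} c_t = \frac{1}{q}\frac{d}{dt}\into c^q$. For the convective term I would use $\nabla\cdot u = 0$ and $u = 0$ on $\partial\Omega$ (from \eqref{boundary}) to obtain
\begin{equation*}
\into c^{q-1} u\cdot\nabla c = \frac{1}{q}\into u\cdot\nabla c^q = -\frac{1}{q}\into (\nabla\cdot u)\,c^q + \frac{1}{q}\int_{\partial\Omega} c^q (u\cdot\nu) = 0.
\end{equation*}
For the diffusion term I would integrate by parts and invoke the homogeneous Neumann condition $\partial_\nu c = 0$ from \eqref{boundary}:
\begin{equation*}
\into c^{q-1}\Delta c = -\into \nabla(c^{q-1})\cdot\nabla c + \int_{\partial\Omega} c^{q-1}\,\partial_\nu c = -(q-1)\into c^{q-2}|\nabla c|^2,
\end{equation*}
where the last identity is understood with the convention $(q-1)\,c^{q-2}|\nabla c|^2 \equiv 0$ when $q=1$. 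Combining these three observations yields \eqref{id;DIforcq}.

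There is really no main obstacle here; the identity is a standard $L^q$-testing of a parabolic equation, and the only points deserving explicit mention are (i) the positivity of $c$ justifying the test function for $q$ close to $1$, and (ii) the vanishing of the convective term, which is exactly where the divergence-free and no-slip conditions on $u$ enter.
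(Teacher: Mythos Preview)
Your proof is correct and follows essentially the same approach as the paper: testing the $c$-equation against $c^{q-1}$, integrating by parts, and using $\nabla\cdot u=0$ together with $u|_{\partial\Omega}=0$ to eliminate the convective contribution. You are in fact slightly more careful than the paper, which leaves the integration-by-parts of $\into c^{q-1}\Delta c$ implicit and does not comment on the admissibility of the test function for $q$ near $1$.
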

\begin{proof}
From the second equation in \eqref{cp} 
we obtain that 
\begin{align}\label{ineq;dif;cq}
  \frac 1q\frac{d}{dt}\into c^q 
  = \into c^{q-1}\Delta c - \into c^q 
  +\into nc^{q-1} - \into c^{q-1} u\cdot \nabla c
\end{align}
on $(0,\tmax)$. 
Here we note from $\nabla\cdot u=0$ 
in $\Omega\times (0,\tmax)$ that  
\begin{align*}
  \into c^{q-1}u\cdot \nabla c
  =\frac 1q\into u\cdot \nabla \left(c^q\right)
  =0 
\end{align*}
on $(0,\tmax)$ and therefore the last term in  
\eqref{ineq;dif;cq} vanishes, so that 
\eqref{id;DIforcq} holds. 
\end{proof}
%
%
%
%
We now turn our attention to the derivation of $L^q(\Omega)$-estimates for $c$.  
In light of the differential inequality from 
Lemma \ref{lem;id;DIforcq}, our main objective will be 
the estimate of $\into nc^{q-1}$.

\begin{lem}\label{lem;q;c}
If $\chi<\sqrt{\f2N}$, for all $q\in(1,\infty)$ and any finite $T\in(0,\tmax]$ 
there exists a constant $C(q,T)>0$ such that 
\begin{align*}
  \lp{q}{c(\cdot,t)}\le C(q,T)
  \quad \mbox{for all}\ t\in (0,T). 
\end{align*}
\end{lem}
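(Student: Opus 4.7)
The strategy is to derive a differential inequality for $\io c^q$ from Lemma \ref{lem;id;DIforcq} and then to control the production term $\io nc^{q-1}$ by combining Hölder's inequality (invoking Lemma \ref{lem;intnpc-r}) with the Gagliardo--Nirenberg inequality, so as to absorb the resulting gradient expression into the diffusive term $(q-1)\io c^{q-2}|\nabla c|^2$. Since Lyapunov interpolation against the $L^1$-bound furnished by Lemma \ref{lem;L1esti;nc} reduces the general case to higher exponents, we may assume $q\ge 2$. The hypothesis $\chi<\sqrt{2/N}$ is equivalent to $N/2<1/\chi^2$, so we fix $p\in(\max\{1,N/2\},\,1/\chi^2)$ and $r\in I_p$, noting that $I_p\subset(0,p-1)$.

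Setting $s:=\frac{pq-p+r}{p-1}$, Hölder's inequality with conjugate exponents $p$ and $p/(p-1)$ produces the decomposition foreshadowed in the introduction,
$$
\io nc^{q-1}\;=\;\io\bigl(n^{p}c^{-r}\bigr)^{1/p}\cdot c^{\,q-1+r/p}\;\le\;\left(\io n^{p}c^{-r}\right)^{1/p}\left(\io c^{s}\right)^{(p-1)/p},
$$
the first factor being controlled, uniformly on $(0,T)$, by Lemma \ref{lem;intnpc-r}. For the second factor we apply the Gagliardo--Nirenberg inequality to $v:=c^{q/2}$, exploiting $\|\nabla v\|_{L^2(\Omega)}^2=\frac{q^2}{4}\io c^{q-2}|\nabla c|^2$ together with the identity $\|v\|_{L^{2/q}(\Omega)}^{2/q}=\io c$, which is bounded by Lemma \ref{lem;L1esti;nc}. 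With the interpolation exponent $\theta\in(0,1)$ determined by $\frac{q}{2s}=\theta\bigl(\tfrac12-\tfrac1N\bigr)+(1-\theta)\tfrac{q}{2}$, this yields
$$
\io c^{s}\;=\;\|v\|_{L^{2s/q}(\Omega)}^{2s/q}\;\le\;C\bigl(\|\nabla v\|_{L^2(\Omega)}^{2s\theta/q}+1\bigr),
$$
so that $(\io c^s)^{(p-1)/p}\le C\bigl(\|\nabla v\|_{L^2(\Omega)}^{\sigma}+1\bigr)$ with $\sigma:=\frac{2s\theta(p-1)}{pq}$.

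A direct computation shows that the condition $\sigma<2$ is equivalent to $r<p(N+2)/N-1$, which is automatic since $r<p-1$. The admissibility $\theta<1$, on the other hand, reduces in the three-dimensional case to $s<3q$, and this in turn translates into an upper bound on $r$ that is compatible with $r\in I_p$ \emph{precisely} because $p>N/2$ has been secured; this is the sole juncture at which the restriction $\chi<\sqrt{2/N}$ enters in an essential way. Young's inequality then permits absorption of $\|\nabla v\|_{L^2(\Omega)}^{\sigma}$ into $(q-1)\io c^{q-2}|\nabla c|^2$, and combined with Lemma \ref{lem;id;DIforcq} we arrive at a differential inequality of the form
$$
\frac{d}{dt}\io c^q + q\io c^q \;\le\; C(q,T)\qquad\text{on }(0,T),
$$
from which an elementary ODE comparison yields the claimed bound, using $c_0\in W^{1,\vartheta}(\Omega)\hookrightarrow L^\infty(\Omega)$ for the initial datum. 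The main delicate point will be the bookkeeping of exponents required to verify $\theta\in(0,1)$ and $\sigma<2$ simultaneously in both space dimensions; everything else is routine.
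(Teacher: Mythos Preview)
Your argument is correct and follows essentially the same route as the paper: the differential identity of Lemma~\ref{lem;id;DIforcq}, H\"older's inequality combined with Lemma~\ref{lem;intnpc-r} to control $\io nc^{q-1}$, then Gagliardo--Nirenberg applied to $c^{q/2}$ and Young's inequality to absorb the gradient contribution. The only notable difference is that you interpolate in Gagliardo--Nirenberg directly against $\|c^{q/2}\|_{L^{2/q}(\Omega)}=\|c\|_{L^1(\Omega)}^{q/2}$ (bounded by Lemma~\ref{lem;L1esti;nc}), whereas the paper interpolates against $\|c^{q/2}\|_{L^2(\Omega)}$ and carries an additive $L^{2/q}$ remainder, which forces a second application of Young's inequality and the auxiliary restriction $p<3$; your choice makes the exponent bookkeeping slightly shorter and renders the constraint $\theta<1$ (i.e.\ $s<3q$ when $N=3$) automatic for every $r\in I_p$ once $p>N/2$, exactly as you note.
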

\begin{proof} 
Without loss of generality we may assume $q\ge 2$. 
We put $p\in (\frac{N}{2},\min\{\frac{1}{\chi^2},3\})$ and 
$r=\frac{p-1}{2}$, so that the inequality $q>p-r$ holds and, clearly, $r\in I_p$. Therefore Lemma \ref{lem;intnpc-r} is applicable and asserts the existence of $C_1>0$ and $C_2>0$ such that, by Hölder's inequality, 
%
%
\begin{align}\label{esti;intncq-1}
  \into nc^{q-1} 
  \le \left(\io n^p c^{-r}\right)^{\frac 1p}
      \left(\int c^{\frac{pq-p+r}{p-1}}\right)^{\frac{p-1}{p}}
  \le C_1e^{C_2T}
  \lp{\frac{2(pq-p+r)}{q(p-1)}}
  {c(\cdot,t)^{\frac{q}{2}}}^{a}
\end{align}
in $(0,T)$, 
where $a:=\frac{2(pq-p+r)}{pq}$. 
Due to the Gagliardo--Nirenberg inequality there is
$C_{GN}>0$ such that 
\begin{align}\label{G-N}
  &\lp{\frac{2(pq-p+r)}{q(p-1)}}{c(\cdot,t)^{\frac{q}{2}}}^a
  &\le 
  C_{GN}
  \lp{2}{\nabla c(\cdot,t)^{\frac{q}{2}}}^{ab}
  \lp{2}{c(\cdot,t)^{\frac{q}{2}}}^{a(1-b)}
  + C_{GN} \lp{\frac{2}{q}}{c(\cdot,t)^{\frac{q}{2}}}^a 
\end{align}
for all $t\in (0,T)$, 
where $b:=\frac{N(q-p+r)}{2(pq-p+r)}$. 
Noting from $-N(p-r) < (2p-N)q$ that 
\begin{align*}
  ab=
  {\frac{2(pq-p+r)}{pq}}\cdot 
  \frac{N(q-p+r)}{2(pq-p+r)} 
  =\frac{N(q-p+r)}{pq}
  <2,
\end{align*}

we infer from the Young inequality that with some $C_3(q,T)>0$
\begin{align}
\lp{2}{\nabla c(\cdot,t)^{\frac{q}{2}}}
  ^{ab}
  \lp{2}{c(\cdot,t)^{\frac{q}{2}}}
  ^{a(1-b)}
\label{cq;ineq;first}
  &\le 
  \frac{q-1}{2C_{GN}C_1e^{C_2T}} 
  \lp{2}{\nabla c(\cdot,t)^{\frac{q}{2}}}^{2}
  +
  C_3(q,T)\lp{2}{c(\cdot,t)^{\frac{q}{2}}}
  ^{\frac{2a(1-b)pq}{2pq-N(q-p+r)}}
\end{align}
for all $t\in (0,T)$.
Here since $N\in\{2,3\}$ implies $(N-1)r<p$, 
we can confirm that 
\begin{align*}
  \frac{2a(1-b)pq}{2pq-N(q-p+r)}=
  \frac{2(2pq+(N-2)p-Nq+(N-2)r)}{2pq-N(q-p+r)}<2. 
\end{align*}
Thus, relying once more on the Young inequality 
we can derive from \eqref{cq;ineq;first}
that there exists $C_4(q,T)>0$ 
such that 
\begin{align}\notag
&\lp{2}{\nabla c(\cdot,t)^{\frac{q}{2}}}^{ab}
  \lp{2}{c(\cdot,t)^{\frac{q}{2}}}^{a(1-b)}
  \\\label{cq;ineq;second}
  &\le 
  \frac{q-1}{2^{a+1}C_{GN}C_1e^{C_2T}} 
  \lp{2}{\nabla c(\cdot,t)^{\frac{q}{2}}}
  ^{2}
  +
  \frac{1}{2^{a+1}C_{GN}C_1e^{C_2T}}
  \lp{2}{c(\cdot,t)^{\frac{q}{2}}}^{2}
  + C_4(q,T)
\end{align}
is valid for all $t\in (0,T)$. 
Combination of \eqref{id;DIforcq}, \eqref{esti;intncq-1}, \eqref{G-N} and \eqref{cq;ineq;second}  
with Lemma \ref{lem;L1esti;nc} implies that with some $C_5(q,T)>0$ 
\begin{align*}
  \frac 1q\frac{d}{dt} \into c^q 
  \le -\frac{q-1}{2}\into c^{q-2}|\nabla c|^2 
    -\frac{1}{2}\into c^q
    + C_5(q,T) 
\end{align*}
on $(0,T)$, 
which means that there exists $C_6(q,T)>0$ 
such that 
%
\[
\lp{q}{c(\cdot,t)} \le C_6(q,T) 
\quad \mbox{for all}\ t\in (0,T).\qedhere
\]
%
\end{proof}
%

%
%
%
%
Now we have already prepared all tools to obtain an 
$\Lom p$-estimate for $n$, for some $p>\f N2$, which is an important stopover on the route to the $\Lom\infty$-estimate for $n$ and will be of particular importance in the proofs of Lemma \ref{lem;esti;nablav} and Lemma \ref{lem;esti;stokes}. 
\begin{lem}\label{lem;Lp;n}
 We assume $\chi<\sqrt{\f2N}$. 
  For any $p \in [1,\frac{1}{\chi^2})$ 
  and any finite $T\in(0,\tmax]$ 
  there is $C(p,T)>0$ such that 
  \begin{align}\label{nbd}
   \lp{p}{n(\cdot, t)} 
   \le C(p,T)
   \quad \mbox{for all}\ t\in (0,T). 
  \end{align}
\end{lem}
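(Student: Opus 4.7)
The plan is to deduce the asserted pointwise-in-time $L^p$-bound by reading $n^p$ as a Hölder-type product of the weighted quantity controlled by Lemma \ref{lem;intnpc-r} and a pure power of $c$ controlled by Lemma \ref{lem;q;c}. The decisive observation is that Lemma \ref{lem;intnpc-r} is available for \emph{every} exponent in $(1,1/\chi^2)$, not merely for the prescribed $p$; since the assumption $p<1/\chi^2$ leaves strictly positive slack, I can fix some $p_\ast \in (\max\{p,1\},1/\chi^2)$. (The endpoint $p=1$ is in any case covered directly by mass conservation, Lemma \ref{lem;L1esti;nc}.) With this choice and, e.g., the midpoint $r_\ast:=(p_\ast-1)/2 \in I_{p_\ast}$, Lemma \ref{lem;intnpc-r} supplies a bound of the form $\io n^{p_\ast}c^{-r_\ast} \le C_1 e^{C_2 T}$ on $(0,T)$.

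Setting $\alpha:=p/p_\ast \in (0,1)$, I then write
\[
n^p = \left(n^{p_\ast}c^{-r_\ast}\right)^{\alpha} c^{r_\ast\alpha}
\]
and apply Hölder's inequality with the conjugate exponents $1/\alpha$ and $1/(1-\alpha)$ to arrive at
\[
\io n^p \le \left(\io n^{p_\ast}c^{-r_\ast}\right)^{\alpha}\left(\io c^{\frac{r_\ast p}{p_\ast-p}}\right)^{1-\alpha} \quad \text{on } (0,T).
\]
The first factor is uniformly controlled on $(0,T)$ by the preceding application of Lemma \ref{lem;intnpc-r}, while the second factor involves only a finite power of $c$ (finite because $p_\ast>p$) and hence is under control by Lemma \ref{lem;q;c}, whose hypothesis $\chi<\sqrt{2/N}$ is exactly the one at our disposal. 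Combined, this yields the claimed bound $\io n^p \le C(p,T)$ on $(0,T)$.

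The only subtlety worth flagging is the reason one cannot simply take $p_\ast=p$: such a choice would force the Hölder exponent of the $c$-factor to be $\infty$ and thereby demand an $L^\infty$-bound of $c$, which is not available at this stage. Exploiting the extra room $p<1/\chi^2$ to pass to a strictly larger admissible exponent is precisely what circumvents this obstacle. Beyond this observation the argument is a one-line Hölder inequality fed by two results already at hand, and I do not anticipate any further technical difficulty.
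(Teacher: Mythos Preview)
Your argument is correct and essentially identical to the paper's own proof: both pick an auxiliary exponent $p_0>p$ (your $p_\ast$) with $r_0=(p_0-1)/2\in I_{p_0}$, factor $n^p=(n^{p_0}c^{-r_0})^{p/p_0}c^{r_0 p/p_0}$, and close via H\"older together with Lemmata \ref{lem;intnpc-r} and \ref{lem;q;c}. The only cosmetic difference is that the paper absorbs the case of small $p$ by a ``without loss of generality $p>N/2$'' reduction, whereas you single out $p=1$ via mass conservation.
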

\begin{proof}
Without loss of generality we assume $p\in (\frac{N}{2},\frac{1}{\chi^2})$, let $p_0\in (p,\frac{1}{\chi^2})$ and set 
$r_0:= \frac{p_0-1}{2}\in I_p$.  
Then we can see from H\"older's inequality that 
\begin{align*}
 \io n^p = \io \kl{n^{p_0}c^{-r_0}}^{\f{r_0p}{p_0}} \le \kl{\io n^{p_0}c^{-r_0}}^{\f p{p_0}} \kl{\io c^{\f{r_0 p}{p_0-p}}}^{\f{p_0-p}{p_0}} 
\end{align*}
on $(0,T)$, which by Lemmata \ref{lem;intnpc-r} and \ref{lem;q;c} implies \eqref{nbd}.
\end{proof}

\section{Boundedness for $u$}
Having obtained $\Lom p$-bounds for $n$ and hence for the driving force in the fluid equation, we devote this section to the derivation of estimates for the fluid velocity. We begin with the following $\Lom2$-information on $u$ and $L^2(\Om\times(0,T))$-estimate for $\na u$, before we separately consider the cases of Stokes- and Navier--Stokes--fluids in Subsections \ref{sec:stokes2d3d} and \ref{sec:navierstokes2d}.
%
%
%
%

\begin{lem}\label{lem;L2estiforu}
If $\chi<\sqrt{\f2N}$, for any finite $T\in(0,\tmax]$ there exists $C(T)>0$ such that 
 \begin{align}\label{eq:ul2}
    \into |u(\cdot,t)|^2\le C(T) 
    \quad \mbox{for all}\ t\in(0,T)
 \end{align}
and 
 \begin{align}\label{eq:naul2}
    \int_0^{T}\into |\nabla u(\cdot,t)|^2\le C(T). 
 \end{align}
\end{lem}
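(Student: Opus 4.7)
The plan is to test the fluid equation with $u$ itself. Since $\nabla\cdot u=0$ and $u$ vanishes on $\partial\Omega$, both the pressure contribution and the convective term $\kappa\io (u\cdot\nabla)u\cdot u$ vanish (the latter regardless of whether $\kappa=0$ or $\kappa=1$), so that a standard energy identity
\[
\frac{1}{2}\frac{d}{dt}\io |u|^2 + \io |\nabla u|^2 = \io n\,\nabla\phi\cdot u \qquad \text{on } (0,\tmax)
\]
emerges.

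To estimate the forcing integral, I would combine H\"older's inequality with the $L^p$-bound on $n$ from Lemma \ref{lem;Lp;n}. The key observation is that the hypothesis $\chi<\sqrt{2/N}$ renders the interval $(\frac{N}{2},\,\frac{1}{\chi^2})$ nonempty, so any $p$ in this range is admissible for Lemma \ref{lem;Lp;n} while its H\"older conjugate $p'=p/(p-1)$ lies strictly below the Sobolev critical exponent ($p'<6$ if $N=3$, and $p'$ arbitrarily large but finite if $N=2$). Combining the boundedness of $\nabla\phi$, which follows from $\phi\in C^2(\ol\Omega)$, with Sobolev's embedding and Poincar\'e's inequality (recall $u=0$ on $\partial\Omega$) yields
\[
\io n\,\nabla\phi\cdot u \le \lp{\infty}{\nabla\phi}\,\lp{p}{n}\,\lp{p'}{u} \le C(T)\,\lp{2}{\nabla u}.
\]
A subsequent application of Young's inequality absorbs half of $\lp{2}{\nabla u}^2$ into the left-hand side and leads to a differential inequality of the form
\[
\frac{d}{dt}\io |u|^2 + \io |\nabla u|^2 \le C(T) \qquad \text{on } (0,T).
\]

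From here, direct integration over $(0,T)$, together with $u_0\in D(A^\alpha)\hookrightarrow L^2_\sigma(\Omega)$, simultaneously delivers \eqref{eq:ul2} and \eqref{eq:naul2}; no Gr\"onwall argument is even needed. The only point that requires care is bookkeeping: one must verify that the exponent $p$ can be chosen in such a way that Lemma \ref{lem;Lp;n} (demanding $p<\frac{1}{\chi^2}$) and the Sobolev embedding of $W^{1,2}_0(\Omega)$ into $L^{p'}(\Omega)$ (demanding $p>\frac{N}{2}$ if $N=3$, and any $p>1$ if $N=2$) are simultaneously satisfied. The hypothesis $\chi<\sqrt{2/N}$ is precisely what makes these two constraints compatible, and this is the only nontrivial aspect of the argument.
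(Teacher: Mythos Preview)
Your proposal is correct and follows essentially the same route as the paper's proof: both test the fluid equation by $u$, observe that the pressure and convective contributions vanish, and control the forcing term $\io n\nabla\phi\cdot u$ by combining H\"older's inequality, the $L^p$-bound for $n$ from Lemma \ref{lem;Lp;n} with $p\in(\tfrac N2,\tfrac1{\chi^2})$, the Sobolev--Poincar\'e inequality for $u\in W^{1,2}_0(\Omega)$, and Young's inequality. The only cosmetic difference is that after arriving at the differential inequality the paper invokes Poincar\'e once more to set up a linear ODE for $\io|u|^2$, whereas you integrate directly over $(0,t)$; your version is slightly more elementary and equally valid.
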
 
\begin{proof}
 Testing the third equation in \eqref{cp} by $u$ and 
 integrating by parts, we see that 
 \begin{align}\label{equ;L2;u}
 \frac 12 \frac d{dt}\into |u|^2 
 + \into |\nabla u|^2 
 = \into nu\cdot \nabla \phi \quad \text{on }(0,T), 
 \end{align}
 because $\kappa \io u\cdot (u\cdot\na)u = 
 -\kappa\io (\na\cdot u)|u|^2=0$. 
 We let $p\in(\f{N+2}{2N},\f1{\chi^2})$ so that Lemma \ref{lem;Lp;n} provides us with $C_1>0$ such that $\norm[\Lom p]{n(\cdot,t)}\le C_1$ for all $t\in(0,T)$. Then setting $p':=\f{p}{p-1}$ we have $p'\in[1,\f{2N}{N-2})$ and may rely on the Sobolev embedding $W^{1,2}(\Om)\hookrightarrow \Lom {p'}$ to obtain $C_2>0$ satisfying $\norm[\Lom{p'}]{w}\le C_2 \norm[\Lom2]{\na w}$ for all $w\in W_0^{1,2}(\Om)$. From Hölder's inequality, this embedding, and Young's inequality, we can conclude that 
\begin{align*}
  \into nu\cdot \nabla \phi 
  \le C_2\lp{\infty}{\nabla \phi} 
  \lp{p}{n}\lp{2}{\nabla u}
  \le \frac 12\into |\nabla u|^2 + 
  \frac{C_1^2C_2^2\lp{\infty}{\nabla \phi}^2 
  }{2} 
\end{align*} 
holds on $(0,T)$, 
which by \eqref{equ;L2;u} implies that 
there exists $C_3>0$ such that 
\begin{align}\label{ineq;L2;u}
 \frac d{dt}\into |u|^2 
 + \into |\nabla u|^2 
 \le C_3 \quad \text{on } (0,T).
 \end{align}
 Thus thanks to the Poincar\'e inequality 
 we can find a constant such that \eqref{eq:ul2} holds. The combination of \eqref{ineq;L2;u} and \eqref{eq:ul2} then also entails  \eqref{eq:naul2} with some $C(T)$.
\end{proof}

\subsection{The case $\kappa =0$}\label{sec:stokes2d3d}
In the case $\kappa=0$ the regularity properties for $n$ and $u$ already established in Lemma \ref{lem;Lp;n} and Lemma \ref{lem;L2estiforu}, respectively, will be sufficient to prove the boundedness of $u$ even in the case of $N=3$. It is well known  that the regularity of solutions to the Stokes subsystem $u_t+Au=\HP\left(n\nabla\phi\right)$ appearing in \eqref{cp} is only contingent on the regularity of the forcing term $\HP\left(n\nabla\phi\right)$. Arguments appearing in the proof of the lemma below have been previously used in e.g. \cite{Winkler_2015_general_sensitivity} and \cite{wang_xiang_15_2d} and rely on semigroup estimates for the Stokes semigroup.

\begin{lem}\label{lem;esti;stokes}
 If $\chi<\sqrt{\f2N}$, for any finite $T\in(0,\tmax]$ and any $\alpha_0\in(\f N4,\alpha]$ satisfying 
 \[
  \alpha_0<1-\f N2 \chi^2 +\f N4,
 \]
 there is $C(T)>0$ such that 
 \begin{equation}\label{eq;esti;stokes}
  \norm[\Lom2]{A^{\alpha_0}\kl{e^{-tA}u_0 + \int_0^t e^{-(t-s)A} \calP(n(\cdot,s)\na\phi)ds}}\le C(T) \qquad \text{for all } t\in(0,T).
 \end{equation}
\end{lem}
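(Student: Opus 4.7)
The expression appearing inside $A^{\alpha_0}(\cdot)$ in \eqref{eq;esti;stokes} is precisely the mild solution of the Stokes subsystem $u_t+Au=\calP(n\nabla\phi)$, $u(0)=u_0$, produced by the variation-of-constants formula. Accordingly, the plan is to split the argument into the two Duhamel summands and to bound each in $L^2(\Omega)$ by means of standard semigroup estimates for $A^{\alpha_0}e^{-sA}$ on $L^2_\sigma(\Omega)$, relying on the $L^p$-information on $n$ provided by Lemma \ref{lem;Lp;n} and the given regularity $u_0\in D(A^\alpha)$ with $\alpha_0\le\alpha$.

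For the first summand, using that $\alpha_0\le \alpha$ and that $e^{-tA}$ is contractive on $L^2_\sigma(\Omega)$, one immediately obtains
\[
\lp{2}{A^{\alpha_0}e^{-tA}u_0}\le \lp{2}{A^{\alpha_0}u_0}\le C\lp{2}{A^\alpha u_0}
\]
for all $t>0$, so this term is uniformly bounded in time.

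For the Duhamel contribution, I would fix $p$ satisfying
\[
\frac{2N}{4(1-\alpha_0)+N}<p<\frac{1}{\chi^2},
\]
an interval that is non-empty precisely because of the assumption $\alpha_0<1-\frac{N}{2}\chi^2+\frac{N}{4}$: indeed, the lower bound is equivalent to $\alpha_0+\frac{N}{2}(\frac{1}{p}-\frac{1}{2})<1$, whereas the upper bound allows one to apply Lemma \ref{lem;Lp;n} and secure a constant $C_1(T)>0$ such that $\lp{p}{n(\cdot,s)}\le C_1(T)$ for all $s\in(0,T)$. Then the standard $L^p$-$L^2$ smoothing estimate for $A^{\alpha_0}e^{-sA}\calP$ (see e.g.\ Giga's results used in \cite{Winkler_2015_general_sensitivity, wang_xiang_15_2d}), together with $\lp{p}{n(\cdot,s)\nabla\phi}\le \lp{\infty}{\nabla\phi}\,\lp{p}{n(\cdot,s)}$, yields
\[
\lp{2}{A^{\alpha_0}e^{-(t-s)A}\calP(n(\cdot,s)\nabla\phi)}\le C(t-s)^{-\alpha_0-\frac{N}{2}(\frac{1}{p}-\frac{1}{2})}\lp{p}{n(\cdot,s)}\lp{\infty}{\nabla\phi}
\]
for $0<s<t<T$. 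Integrating over $s\in(0,t)$ and using that the exponent is strictly greater than $-1$, the resulting integral is controlled by $C\,T^{\,1-\alpha_0-\frac{N}{2}(\frac{1}{p}-\frac{1}{2})}$, which is finite for any finite $T\in(0,\tmax]$. Combining the two bounds produces the claimed estimate.

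The main obstacle, in terms of substance rather than of length, is simply the balancing of exponents so that the interval for $p$ is non-empty. The restriction $\alpha_0<1-\frac{N}{2}\chi^2+\frac{N}{4}$ reflects the best one can do by pushing $p$ toward $\frac{1}{\chi^2}$, which in turn is the sharpest integrability for $n$ reachable via Lemma \ref{lem;Lp;n}; the lower bound $\alpha_0>\frac{N}{4}$ does not enter this lemma but will be needed later to deduce $L^\infty$-control of $u$ from $D(A^{\alpha_0})\hookrightarrow L^\infty(\Omega)$.
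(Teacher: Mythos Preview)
Your proposal is correct and follows essentially the same strategy as the paper: split into the homogeneous $u_0$-part and the Duhamel integral, and control the latter via Stokes semigroup smoothing combined with the $L^p$-bound on $n$ from Lemma~\ref{lem;Lp;n}. The paper takes a slightly more elaborate route---inserting a factor $A^{-\delta}$ and an intermediate integrability exponent $p_0>p$ before applying the $L^{p_0}$--$L^2$ smoothing estimate---whereas you invoke the direct $L^p$--$L^2$ estimate for $A^{\alpha_0}e^{-sA}\calP$; just make sure to impose $p\le 2$ so that this estimate actually applies, which is always possible since your lower bound $\tfrac{2N}{4(1-\alpha_0)+N}$ is strictly less than $2$ because $\alpha_0<1$.
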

\begin{proof}
 We pick $p\in (\f N2,\min\{\f1{\chi^2},2\})$ and $\delta\in(0,1)$ sufficiently small such that 
\begin{align*}
\alpha_0+\delta<1-\frac{N}{2}\left(\frac{1}{p}-\frac{1}{2}\right)
\end{align*}
 holds. We then fix $p_0>p$ satisfying 
 \begin{align*}
2\delta-\frac{N}{p}>-\frac{N}{p_0}
\end{align*}
and note that 
\begin{align}\label{eq:lem-u-eq1}
\alpha_0+\delta+\frac{N}{2}\left(\frac{1}{p_0}-\frac{1}{2}\right)<1. 
\end{align}
Since $u_0\in D(A^{\alpha_0})$ by $\alpha_0\le \alpha$, we can use that $A^{\alpha_0}$ and $e^{-tA}$ commute (\cite[p.\ 206, (1.5.16)]{sohr_book}) and thereby find $C_1>0$ such that 
\begin{equation}\label{eq:lem-u-eq2}
 \norm[\Lom2]{A^{\alpha_0}e^{-tA}u_0}=\norm[\Lom2]{e^{-tA}A^{\alpha_0}u_0} \le \norm[\Lom2]{A^{\alpha_0}u_0}\le C_1 \qquad \text{for all } t\in(0,\tmax). 
\end{equation}
To treat the integrand in \eqref{eq;esti;stokes}, we recall that by standard regularity estimates for the Stokes semigroup (e.g. \cite[Lemma 3.1]{Winkler_2015_general_sensitivity}, 
\cite[Lemma 2.3]{caolan16}) there exist $C_2>0$ and $\lambda_1>0$ such that
\begin{align}\label{eq:lem-u-eq4}
&\big\|A^{\alpha_0+\delta}e^{-(t-s)A}A^{-\delta}\HP\left((n(\cdot,s)\nabla\phi\right)\big\|_{L^{2}\left(\Omega\right)}\nonumber\\
&\leq\  
C_2(t-s)^{-\alpha_0-\delta-\frac{N}{2}(\frac{1}{p_0}-\frac{1}{2})}e^{-\lambda_1(t-s)}\big\|A^{-\delta}\HP\left(n(\cdot,s)\nabla\phi\right)\big\|_{L^{p_0}\left(\Omega\right)}
\end{align}
for all $s\in(0,t)$. Additionally, our choice of $2\delta-\frac{N}{p}>-\frac{N}{p_0}$ implies that for any $s\in(0,t)$ we have
\begin{align}\label{eq:lem-u-eq5}
\big\|A^{-\delta}\HP\left(n(\cdot,s)\nabla\phi\right)\big\|_{L^{p_0}\left(\Omega\right)}\leq C_3\|n(\cdot,s)\nabla\phi\|_{L^p\left(\Omega\right)}\leq C_3\|\nabla\phi\|_{L^\infty\left(\Omega\right)}\|n(\cdot,s)\|_{L^p\left(\Omega\right)},
\end{align}
with some $C_3>0$ (see also \cite[Lemma 3.3]{Winkler_2015_general_sensitivity} and \cite[Lemma 2.3]{wang_xiang_15_2d}). Due to the fact that $p\in(\frac{N}{2},\frac{1}{\chi^2})$, Lemma \ref{lem;Lp;n} provides $C_4>0$ such that $\|n(\cdot,t)\|_{L^p\left(\Omega\right)}\leq C_4$ for all $t\in(0,T)$ and therefore a combination of \eqref{eq:lem-u-eq2}--\eqref{eq:lem-u-eq5} shows that
\begin{align*}
&\norm[\Lom2]{A^{\alpha_0}\kl{e^{-tA}u_0 + \int_0^t e^{-(t-s)A} \calP(n(\cdot,s)\na\phi)ds}}\\ 
&\leq C_1+C_2C_3C_4\|\nabla \phi\|_{L^\infty\left(\Omega\right)}\int_0^t(t-s)^{-\alpha_0-\delta-\frac{N}{2}(\frac{1}{p_0}-\frac{1}{2})}e^{-\lambda_1(t-s)} d s
\end{align*}
holds for all $t\in(0,T)$, which in view of \eqref{eq:lem-u-eq1} and the fact that $\phi\in C^2\left(\overline{\Omega}\right)$ implies \eqref{eq;esti;stokes}. 
\end{proof}

\begin{lem}\label{lem;esti;u}
Assume $\chi<\sqrt{\f 2N}$ and let $\alpha_0\in(\f N4,\alpha]$ satisfy $\alpha_0<1-\f N2 \chi^2 +\f N4$. 
If $\kappa=0$, corresponding to any finite $T\in(0,\tmax]$ there exists $C(T)>0$ such that
\begin{equation}\label{eq:Aalpha-u-stokes-bd}
 \norm[\Lom2]{A^{\alpha_0}u(\cdot,t)}\le C(T) \quad \text{for all } t\in(0,T)
\end{equation}
and 
\begin{align}\label{eq:u-stokes-bd}
\|u(\cdot,t)\|_{L^\infty\left(\Omega\right)}\leq C(T)\quad\text{for all }t\in(0,T).
\end{align}
\end{lem}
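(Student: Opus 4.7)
Since $\kappa=0$, the third equation in \eqref{cp} reduces to the Stokes equation $u_t+Au=\calP(n\nabla\phi)$ with $u(\cdot,0)=u_0$, and hence the variation-of-constants representation
\[
 u(\cdot,t)=e^{-tA}u_0+\int_0^t e^{-(t-s)A}\calP\!\left(n(\cdot,s)\nabla\phi\right)ds, \qquad t\in(0,\tmax),
\]
holds. Applying the bound \eqref{eq;esti;stokes} from Lemma \ref{lem;esti;stokes} directly to this representation yields \eqref{eq:Aalpha-u-stokes-bd}. Note that the hypothesis $\alpha_0\in(\f N4,\alpha]$ with $\alpha_0<1-\f N2\chi^2+\f N4$ is precisely what is required for Lemma \ref{lem;esti;stokes} to be applicable.

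For the passage from \eqref{eq:Aalpha-u-stokes-bd} to \eqref{eq:u-stokes-bd}, the key ingredient is the continuous embedding $D(A^{\alpha_0})\hookrightarrow L^\infty(\Omega)$, which holds whenever $2\alpha_0>\frac N2$, i.e.\ precisely under the assumption $\alpha_0>\frac N4$ we already have at our disposal (see e.g.\ \cite{sohr_book} or \cite[Lemma 3.1]{Winkler_2015_general_sensitivity}). Fixing a constant $C_{\rm emb}>0$ with $\|w\|_{L^\infty(\Omega)}\le C_{\rm emb}\|A^{\alpha_0}w\|_{L^2(\Omega)}$ for all $w\in D(A^{\alpha_0})$, and combining this with \eqref{eq:Aalpha-u-stokes-bd}, immediately yields the $L^\infty$-bound \eqref{eq:u-stokes-bd}.

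The proof is essentially routine once Lemma \ref{lem;esti;stokes} is in hand; no step presents a genuine obstacle here, as the main analytic difficulty (establishing the fractional-power bound in the presence of the forcing term $\calP(n\nabla\phi)$) has already been dispatched in the previous lemma. The only thing to be careful about is that the choice of $\alpha_0$ satisfying both $\alpha_0>\frac N4$ (for the Sobolev-type embedding) and the upper bound $\alpha_0<1-\frac N2\chi^2+\frac N4$ (for the integrability of the singular kernel in \eqref{eq:lem-u-eq4}) is consistent, which is guaranteed by the standing assumption $\chi<\sqrt{\frac 2N}$ (so that $1-\frac N2\chi^2+\frac N4>\frac N4$).
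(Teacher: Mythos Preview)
Your proof is correct and follows essentially the same route as the paper: write the variation-of-constants formula for the Stokes equation, apply Lemma~\ref{lem;esti;stokes} to obtain \eqref{eq:Aalpha-u-stokes-bd}, and then invoke the embedding $D(A^{\alpha_0})\hookrightarrow L^\infty(\Omega)$ (the paper phrases it as $D(A^{\alpha_0})\hookrightarrow C^\gamma(\Omega)$ for $\gamma\in(0,2\alpha_0-\tfrac N2)$, which is slightly stronger but used only to conclude the same $L^\infty$-bound).
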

\begin{proof}
 Since for $\kappa=0$ the solution $u$ is given by 
 \[
  u(\cdot,t)=e^{-tA} u_0 + \int_0^t e^{-(t-s)A} \calP \kl{n(\cdot,s)\na \phi} ds, 
 \]
 due to the compatibility of the choice of $\alpha_0$ with the requirements of Lemma \ref{lem;esti;stokes}, this lemma immediately yields \eqref{eq:Aalpha-u-stokes-bd}, whereupon \eqref{eq:u-stokes-bd} is a consequence of the embedding $D(A^{\alpha_0})\hookrightarrow C^{\gamma}(\Om)$ for arbitrary $\gamma\in(0,2\alpha_0-\f N2)$. 
\end{proof}

\subsection{The case $\kappa =1$ and $N=2$}\label{sec:navierstokes2d}
Here we will focus on the case that $\kappa=1$ and $N=2$. 
In this setting we can make use of arguments 
previously employed in \cite{win_CTNS_global_largedata}. 

%
%
%
%

The proof of Lemma \ref{lem:nauAu} will require some spatio-temporal integrability of $n$ of higher order than directly guaranteed by Lemma \ref{lem;Lp;n}. We therefore prepare the following:  

\begin{lem}\label{lem;timeintnpcr} 
  Assume that $\kappa=1$ and $N=2$. 
  If $\chi<\sqrt{\f 2N}$, for any finite $T\in(0,\tmax]$ 
  there is $C(T)>0$ 
  such that 
  \begin{align*}
    \int_0^{T} \into n^2 \le C(T). 
  \end{align*}
\end{lem}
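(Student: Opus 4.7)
The plan is to exploit the spatio\nobreakdash-temporal $L^2$\nobreakdash-bound on $\na(n^{p/2}c^{-r/2})$ already furnished by Lemma \ref{lem;intnpc-r} and to upgrade it via the two\nobreakdash-dimensional Gagliardo--Nirenberg inequality, finally translating the ensuing control of $\io n^{2p}c^{-2r}$ into a bound for $\io n^2$ through Hölder's inequality and the mass bound for $c$.

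More concretely, since in this case $\chi<\sqrt{\f2N}=1$, I would first fix some $p\in(1,\f1{\chi^2})$ and set $r:=\f{p-1}{2}$, which lies at the midpoint of the admissible interval $I_p$. Writing $w:=n^{\f p2}c^{-\f r2}$, Lemma \ref{lem;intnpc-r} then supplies a constant $C_1(T)>0$ such that $\lp{2}{w(\cdot,t)}^2 = \io n^p c^{-r} \le C_1(T)$ on $(0,T)$ and $\int_0^T \lp{2}{\na w}^2 \le C_1(T)$. I would then invoke the two\nobreakdash-dimensional Gagliardo--Nirenberg inequality in the form
\[
 \lp{4}{w}^4 \le C_{GN}\lp{2}{\na w}^2\lp{2}{w}^2 + C_{GN}\lp{2}{w}^4
\]
and integrate in time; the uniform bound on $\lp{2}{w}$ and the time\nobreakdash-integrability of $\lp{2}{\na w}^2$ combine to yield $\int_0^T \io n^{2p}c^{-2r} = \int_0^T \io w^4 \le C_2(T)$ for some $C_2(T)>0$.

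To recover an estimate on $n^2$, I would split $n^2 = (n^{2p}c^{-2r})^{\f1p}\cdot c^{\f{2r}p}$ and apply Hölder's inequality with exponents $p$ and $\f p{p-1}$, obtaining
\[
 \io n^2 \le \kl{\io n^{2p}c^{-2r}}^{\f1p} \kl{\io c^{\f{2r}{p-1}}}^{\f{p-1}p}.
\]
The deliberate choice $r=\f{p-1}2$ reduces the second exponent to $1$, so that Lemma \ref{lem;L1esti;nc} guarantees the second factor to be bounded uniformly on $(0,T)$. One further application of Hölder's inequality in time to $f(t):=\io n^{2p}c^{-2r}$ — writing $\int_0^T f^{\f1p} \le T^{\f{p-1}p}(\int_0^T f)^{\f1p}$ — then produces the desired $\int_0^T \io n^2 \le C(T)$.

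The only mildly delicate point is the choice $r=\f{p-1}2$, which is made precisely so that the remaining integrand in the last Hölder step is simply $c$, enabling the use of the already available $L^1$\nobreakdash-bound. No serious obstacle is expected: the argument is essentially a standard Gagliardo--Nirenberg bootstrap applied to the weighted quantity $n^{p/2}c^{-r/2}$ whose gradient is controlled in $L^2(\Om\times(0,T))$ by Lemma \ref{lem;intnpc-r}, and the two\nobreakdash-dimensionality of $\Om$ enters only through the specific form of this Gagliardo--Nirenberg inequality.
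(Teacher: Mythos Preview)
Your proposal is correct and follows essentially the same route as the paper: fix $p\in(1,\frac1{\chi^2})$, $r=\frac{p-1}{2}$, apply the two-dimensional Gagliardo--Nirenberg inequality to $w=n^{p/2}c^{-r/2}$ to obtain $\int_0^T\io n^{2p}c^{-(p-1)}\le C(T)$, and then recover the bound on $\int_0^T\io n^2$ via the $L^1$-bound on $c$. The only cosmetic difference is the final step: the paper uses Young's inequality pointwise ($n^2\le n^{2p}c^{-(p-1)}+c$) and integrates directly, whereas you use H\"older in space followed by H\"older in time, which is equivalent but slightly less economical.
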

\begin{proof}
Let $p\in (1,\frac{1}{\chi^2})$ and 
$r:=\frac{p-1}{2}\in I_p$. 
First we recall from \eqref{timeint;nablanc} 
that there are $C_1>0$ and $C_2>0$ such that 
 \begin{align*}
 \int_0^{T}\into 
 |\nabla (n^{\frac{p}{2}}c^{-\frac{r}{2}})|^2 
 \le C_1 \quad\text{and }\quad \sup_{t\in(0,T)} \io n^pc^{-r}  (\cdot,t)\le C_2 
\end{align*}
hold.  
The Gagliardo--Nirenberg inequality hence provides $C_3>0$ such that 
\begin{align*}
  \int_0^{T} 
  \lp{2p}{n(\cdot,t)c(\cdot,t)^{-\frac{r}{p}}}^{2p}\,dt 
  &= \int_0^{T} 
  \lp{4}{n(\cdot,t)^{\frac{p}{2}}
  c(\cdot,t)^{-\frac{r}{2}}}^{4}\,dt 
  \\
  &\le C_3 
  \int_0^{T} 
  \lp{2}{\nabla (n(\cdot,t)^{\frac{p}{2}}c(\cdot,t)^{-\frac{r}{2}})}^{2}
  \lp{2}{n(\cdot,t)^{\frac{p}{2}}c(\cdot,t)^{-\frac{r}{2}}}^{2}\,dt 
  \\
  &\quad\, 
  + C_3 \int_0^{T} 
  \lp{2}{n(\cdot,t)^{\frac{p}{2}}c(\cdot,t)^{-\frac{r}{2}}}\,dt  \le C_3C_2C_1 + C_3C_2T,
\end{align*}
which means that 
  \begin{align}\label{timeintnpcr}
    \int_0^{T}\into n^{2p}c^{-(p-1)}\le C
  \end{align}
holds. Thanks to Young's inequality, we can estimate 
\begin{align*}
  \int_0^{T} \into n^2 
  \le \int_0^{T} \into n^{2p} c^{-(p-1)} 
  + \int_0^{T} \into c, 
\end{align*} 
so that \eqref{timeintnpcr} and the $\Lom{1}$-boundedness of $c$ as asserted by 
Lemma \ref{lem;L1esti;nc} finish the proof.
\end{proof}
%
%
%
%
%
\begin{lem}\label{lem:nauAu}
 If $\chi<\sqrt{\f 2N}$, $\kappa=1$, $N=2$, then for any finite $T\in(0,\tmax]$ there exists $C(T)>0$ such that 
  \begin{align*}
    \into |\nabla u(\cdot,t)|^2 \le C(T) 
    \quad \mbox{for all}\ t\in (0,T)
  \end{align*}
  and 
  \begin{align}\label{Aubd}
    \int_0^{T} \into |Au|^2 \le C(T). 
  \end{align}
\end{lem}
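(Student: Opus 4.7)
The plan is to test the Navier--Stokes equation with $Au$ and exploit two-dimensional Gagliardo--Nirenberg estimates together with a quadratic Gr\"onwall argument. Writing the third equation of \eqref{cp} in projected form $u_t + \calP((u\cdot\nabla)u) + Au = \calP(n\nabla\phi)$, multiplying by $Au$ and integrating yields on $(0,\tmax)$
\begin{align*}
 \tfrac12 \tfrac{d}{dt} \io |\nabla u|^2 + \io |Au|^2 = -\io \calP((u\cdot\nabla)u)\cdot Au + \io \calP(n\nabla\phi)\cdot Au,
\end{align*}
where I use $\io u_t\cdot Au = \tfrac12 \tfrac{d}{dt}\|A^{1/2}u\|_2^2 = \tfrac12 \tfrac{d}{dt}\io |\nabla u|^2$ for $u(\cdot,t)\in D(A)$. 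Note that $\io|\nabla u_0|^2$ is finite thanks to $u_0\in D(A^\alpha)$ with $\alpha>\tfrac{N}{4}=\tfrac12$.

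For the forcing term I plan to use Cauchy--Schwarz and Young to bound it by $\tfrac14 \io |Au|^2 + C \|\nabla\phi\|_\infty^2 \io n^2$, whose time integral is finite by Lemma \ref{lem;timeintnpcr}. For the convective term I would rely on the classical 2D chain
\begin{align*}
 \Big|\io ((u\cdot\nabla)u)\cdot Au\Big| \le \lp{4}{u}\lp{4}{\nabla u}\lp{2}{Au} \le C\lp{2}{u}^{\frac12}\lp{2}{\nabla u}\lp{2}{Au}^{\frac32},
\end{align*}
which uses $\|u\|_{L^4}^2\le C\|u\|_{L^2}\|\nabla u\|_{L^2}$ and $\|\nabla u\|_{L^4}^2\le C\|Au\|_{L^2}\|\nabla u\|_{L^2}$ (valid in two-dimensional domains, combining Gagliardo--Nirenberg with the equivalence $\|\nabla^2 u\|_{L^2}\sim \|Au\|_{L^2}$ for $u\in D(A)$). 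Young's inequality then gives the bound $\tfrac14\io|Au|^2 + C\|u\|_2^2\|\nabla u\|_2^4$.

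Setting $y(t):=\io|\nabla u(\cdot,t)|^2$ and absorbing the $Au$-terms, I arrive at
\begin{align*}
 y'(t) + \io |Au(\cdot,t)|^2 \le C_1 \lp{2}{u(\cdot,t)}^2\, y(t)^2 + C_2 \io n^2(\cdot,t) \qquad \text{on } (0,T).
\end{align*}
At this point Lemma \ref{lem;L2estiforu} provides both the pointwise bound $\|u(\cdot,t)\|_2^2\le C(T)$ and the crucial $L^1$-in-time bound $\int_0^T y(s)\,ds\le C(T)$, while Lemma \ref{lem;timeintnpcr} controls $\int_0^T \io n^2$. Treating $C_1\|u\|_2^2\, y$ as the coefficient of $y$, Gr\"onwall's lemma gives
\begin{align*}
 y(t)\le \Bigl(y(0)+C_2\int_0^t\!\io n^2\Bigr)\exp\Bigl(C_1\sup_{s\in(0,T)}\!\!\lp{2}{u(\cdot,s)}^2\!\!\int_0^t y(s)\,ds\Bigr)\le C(T),
\end{align*}
which is the first asserted bound. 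Re-integrating the differential inequality over $(0,T)$ and using the already-established boundedness of $y$ then yields \eqref{Aubd}.

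The main obstacle is handling the nonlinear convective term without spoiling the Gr\"onwall structure: it naturally produces a $y^2$-type right-hand side which would normally threaten blow-up of $y$ in finite time, and it is only the $L^1_t$-bound on $y$ furnished by Lemma \ref{lem;L2estiforu} -- itself a two-dimensional phenomenon -- that rescues the argument, which is why the restriction $N=2$ enters essentially.
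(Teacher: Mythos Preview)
Your proof is correct and follows essentially the same route as the paper: testing the fluid equation with $Au$, absorbing the forcing term via Young's inequality and Lemma~\ref{lem;timeintnpcr}, reducing the convective term to a contribution of the form $C\,\lp{2}{\nabla u}^4$ by two-dimensional Gagliardo--Nirenberg/Ladyzhenskaya estimates, and then closing with the Gr\"onwall argument that treats one factor of $y$ as an $L^1_t$-coefficient via \eqref{eq:naul2}. The only cosmetic difference is that the paper handles the convective term by the $L^\infty$--$L^2$--$L^2$ splitting $\io|u|^2|\nabla u|^2\le\lp{\infty}{u}^2\lp{2}{\nabla u}^2$ combined with $\lp{\infty}{u}^2\le C\norm[W^{2,2}(\Omega)]{u}\lp{2}{u}$, whereas you use the $L^4$--$L^4$--$L^2$ H\"older splitting; both lead to the same differential inequality and the same conclusion.
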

\begin{proof}
Testing the third equation in \eqref{cp} by $Au$ and 
using the Young inequality, we obtain that 
\begin{align}\label{diffineq:nau2}
  \frac{1}{2}\frac{d}{dt}\into |\nabla u|^2 
  + \into |Au|^2 
  &= \into (n\nabla \phi) Au 
  - \into (u\cdot \nabla)u Au \nn
  \\
  &\le \frac{1}{2}\into |Au|^2 
  + C_1 \into n^2 + \into |u|^2 |\nabla u|^2 \quad \text{on } (0,T)
\end{align}
with some $C_1>0$. 
Concerning the last term herein, we follow 
\cite[Proof (of Theorem 1.1)]{win_CTNS_global_largedata} and employ the Gagliardo--Nirenberg inequality, Lemma \ref{lem;L2estiforu} and Young's inequality in obtaining $C_2>0$, $C_3>0$ and $C_4>0$ such that  
\begin{align*}
 \io |u|^2|\na u|^2 &\le \norm[\Lom{\infty}]{u}^2 \norm[\Lom2]{\na u}^2 \\
 &\le C_2 \norm[W^{2,2}(\Om)]{u} \norm[\Lom2]{u}  \norm[\Lom2]{\na u}^2\\
 &\le C_2 C_3 \norm[W^{2,2}(\Om)]{u}  \norm[\Lom2]{\na u}^2\\
 &\le \frac14 \norm[\Lom2]{Au}^2 + C_4 \norm[\Lom2]{\na u}^4 \qquad \text{on } (0,T).
\end{align*}
From \eqref{diffineq:nau2} we can hence see that with $C_5:=\max\{C_1,C_4\}>0$ 
\begin{align*}
  \frac d{dt} \into |\nabla u|^2 
  + \frac 14\into |Au|^2 
  \le 
  C_5\left(\Big(\into |\nabla u|^2\Big)^2
  + \into n^2\right) \qquad \text{on } (0,T). 
\end{align*}
If we put $y(t):=\into |\nabla u(\cdot,t)|^2$, $t\in (0,T)$, then $y$ satisfies 
\begin{align*}
  y'(t)\le C_5\left(\Big(\io |\na u(\cdot,t)|^2\Big) y(t) + \into n^2(\cdot,t)\right) 
  \quad \text{for all } t\in (0,T), 
\end{align*}
which implies 
\begin{align*}
  y(t) \le 
  y(0)e^{C_5\int_0^t \io |\na u(\cdot,s)|^2 \,ds}
  + C_5 \int_0^t e^{C_5\int_s^t \io |\na u(\cdot,\sigma)|^2\,d\sigma} 
  \Big(\into n^2(x,s)\,dx\Big)\,ds, \quad t\in(0,T).  
\end{align*}
Here noting from Lemmata \ref{lem;L2estiforu} 
and \ref{lem;timeintnpcr} that both $\int_0^T\io |\na u(\cdot,\sigma)|^2d\sigma$ and $\int_0^T\io n^2(\cdot,s)\,ds$ are finite,  
we find $C_6(T)>0$ such that 
\begin{align*}
  \into |\nabla u|^2 \le C_6(T)\qquad \text{on } (0,T). 
\end{align*}
Thanks to this boundedness, we can establish \eqref{Aubd}.
%
\end{proof}

%
%
%
%
%

Then the same argument as in \cite[Proof (of Theorem 1.1)]{win_CTNS_global_largedata} 
leads to the $\Lom{\infty}$-estimate for $u$:  

\begin{lem}\label{lem;esti;u;Navier}
  If $\chi<\sqrt{\f 2N}$, $\kappa=1$, $N=2$, then for any finite $T\in(0,\tmax]$ and any $\alpha_0\in(\frac{N}{4},\alpha]$ there exists $C_1(T)>0$ such that 
  \begin{align}\label{Aalpha;navier}
  \lp{2}{A^{\alpha_0} u(\cdot,t)}\le C_1(T)
  \quad \mbox{for all}\ t\in (0,T).
  \end{align}
  Moreover, there exists $C_2(T)>0$ satisfying 
  \begin{align}\label{u;infty;navier}
  \lp{\infty}{u(\cdot,t)}\le C_2(T)
  \quad \mbox{for all}\ t\in (0,T).
  \end{align}
\end{lem}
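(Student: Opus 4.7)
The plan is to mirror the Duhamel-formula approach of Lemma \ref{lem;esti;stokes}, now also accommodating the convective nonlinearity. Starting from
\[
u(t)=e^{-tA}u_0+\int_0^t e^{-(t-s)A}\calP\bigl(n(s)\nabla\phi-(u\cdot\nabla)u(s)\bigr)\,ds,
\]
I would apply $A^{\alpha_0}$ and bound the three resulting contributions separately in $L^2(\Omega)$ uniformly in $t\in(0,T)$. Once \eqref{Aalpha;navier} is established, \eqref{u;infty;navier} follows at once from the embedding $D(A^{\alpha_0})\hookrightarrow L^\infty(\Omega)$ available for every $\alpha_0>N/4$, exactly as in the closing step of Lemma \ref{lem;esti;u}.

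The initial-data piece $e^{-tA}A^{\alpha_0}u_0$ is controlled by $\|A^{\alpha_0}u_0\|_{L^2}\le C\|A^\alpha u_0\|_{L^2}$ since $\alpha_0\le\alpha$ and $u_0\in D(A^\alpha)$. The forcing integral $\int_0^t A^{\alpha_0}e^{-(t-s)A}\calP(n\nabla\phi)(s)\,ds$ I would treat verbatim as in Lemma \ref{lem;esti;stokes}: pick $p\in(N/2,1/\chi^2)$, use Lemma \ref{lem;Lp;n} to supply the uniform $L^p$-bound on $n$, and combine this with the Stokes-semigroup smoothing estimate $\|A^{\alpha_0+\delta}e^{-(t-s)A}A^{-\delta}\calP v\|_{L^2}\le C(t-s)^{-\alpha_0-\delta-\frac{N}{2}(1/p_0-1/2)}e^{-\lambda_1(t-s)}\|v\|_{L^p}$ for suitably chosen small $\delta>0$ and $p_0>p$.

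The genuinely new ingredient is the convective contribution $\int_0^t A^{\alpha_0}e^{-(t-s)A}\calP((u\cdot\nabla)u)(s)\,ds$. Here I would invoke Lemma \ref{lem:nauAu}, which supplies $u\in L^\infty((0,T);W^{1,2}(\Omega))$. In the two-dimensional setting at hand, the Sobolev embedding $W^{1,2}(\Omega)\hookrightarrow L^q(\Omega)$ holds for every $q<\infty$, so H\"older's inequality yields $(u\cdot\nabla)u\in L^\infty((0,T);L^{q_1}(\Omega))$ for every $q_1<2$. Selecting $q_1$ close enough to $2$ that $\alpha_0+\frac{N}{2}(1/q_1-1/2)<1$ --- which is feasible because $\alpha_0\le\alpha<1$ --- the standard Stokes-semigroup smoothing estimate produces an integrable time singularity near $s=t$ and hence a uniform bound on this integral.

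The main obstacle I foresee is coordinating the auxiliary exponents $\delta$, $p$, $p_0$, $q_1$ so that both the forcing and convective time-singularity exponents $\alpha_0+\delta+\frac{N}{2}(1/p_0-1/2)$ and $\alpha_0+\frac{N}{2}(1/q_1-1/2)$ remain strictly below $1$; this is precisely the tension that forced the explicit condition on $\alpha_0$ in Lemma \ref{lem;esti;stokes}, and an analogous restriction should be expected to be implicit here. Once the three bounds are collected, adding them gives \eqref{Aalpha;navier}, and \eqref{u;infty;navier} follows at once from $D(A^{\alpha_0})\hookrightarrow L^\infty(\Omega)$ for $\alpha_0>N/4$.
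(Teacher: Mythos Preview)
Your proposal is correct, and your treatment of the convective term is actually somewhat simpler than the paper's. Both proofs handle the initial data and the $n\nabla\phi$ forcing via Lemma~\ref{lem;esti;stokes}. For the nonlinear piece, however, the paper keeps the spatial norm at $L^2(\Omega)$, writes $\|A^{\alpha_0}e^{-(t-s)A}\calP((u\cdot\nabla)u)\|_{L^2}\le C(t-s)^{-\alpha_0}\|(u\cdot\nabla)u\|_{L^2}$, and then applies H\"older's inequality \emph{in time}: choosing $p>2$ with $p'\alpha_0<1$, it controls $\int_0^T\|(u\cdot\nabla)u\|_{L^2}^p\,ds$ by Gagliardo--Nirenberg interpolation $\|\nabla u\|_{L^{2p/(p-2)}}\le C\|\Delta u\|_{L^2}^{2/p}\|\nabla u\|_{L^2}^{1-2/p}$, which consumes \emph{both} conclusions of Lemma~\ref{lem:nauAu} (the $L^\infty_tL^2_x$ bound on $\nabla u$ and the $L^2_tL^2_x$ bound on $Au$). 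Your route instead sacrifices a little spatial integrability---placing $(u\cdot\nabla)u$ in $L^\infty((0,T);L^{q_1}(\Omega))$ for $q_1<2$---so that the time singularity $(t-s)^{-\alpha_0-(1/q_1-1/2)}$ is integrable directly, using only the first conclusion of Lemma~\ref{lem:nauAu}. This is shorter and renders the second estimate \eqref{Aubd} of Lemma~\ref{lem:nauAu} unnecessary for the present purpose. Your closing remark about the implicit restriction on $\alpha_0$ coming from Lemma~\ref{lem;esti;stokes} is apt; the paper's proof simply asserts that $\alpha_0$ satisfies that condition, so your proposal is no less complete on this point.
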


\begin{proof}
 Since $\alpha_0$ satisfies the condition of Lemma \ref{lem;esti;stokes}, we see that 
 \begin{align*}
  \norm[\Lom2]{A^{\alpha_0 } u(\cdot,t)} &\le \norm[\Lom2]{A^{\alpha_0 }\kl{e^{-tA}u_0+\int_0^t e^{-(t-s)A} \calP\big( n(\cdot,s)\na \phi\big) \,ds}} \\&\qquad + \int_0^t \norm[\Lom2]{A^{\alpha_0 } e^{-(t-s)A}\calP\big( (u\cdot \na)u\big)(\cdot,s)} ds \\
  &\le C_{3} + \int_0^t \norm[\Lom2]{A^{\alpha_0 } e^{-(t-s)A}\calP \big((u\cdot \na)u\big)(\cdot,s)} ds\quad \text{for all } t\in(0,T),
 \end{align*}
 with $C_{3 }>0$ given by said lemma. 
 Again following \cite[Proof (of Theorem 1.1)]{win_CTNS_global_largedata}, we choose $p>2$ so large that $p':=\f{p}{p-1}$ satisfies $p'\alpha_0 <1$. 
 Then by a well-known estimate for the norm of $A^{\alpha_0 } e^{-tA}$ (see e.g. \cite[Lemma 2.3i)]{caolan16}) and Hölder's inequality, we find that with some $C_{4 }>0$
 \begin{align*}
  \int_0^t \norm[\Lom2]{A^{\alpha_0} e^{-(t-s)A}\calP \big((u\cdot\na)u\big)(\cdot,s)} ds &\le C_{4 } \int_0^t (t-s)^{-\alpha_0} \norm[\Lom2]{(u(\cdot,s)\cdot\na)u(\cdot,s)}ds\\
  &\hspace{-2.5cm}\le C_{4 }y \kl{\int_0^t (t-s)^{-p'\alpha_0}\,ds}^{\f1{p'}} \kl{\int_0^t \norm[\Lom2]{(u(\cdot,s)\cdot\na)u(\cdot,s)}^{p}ds}^{\f1{p}}
 \end{align*}
 for all $t\in(0,T)$. Since $W^{1,2}(\Om)\hookrightarrow L^{p}(\Om)$ due to $N=2$, from this embedding and the Gagliardo--Nirenberg inequality we obtain $C_{5 }>0$ and $C_{6  }>0$, respectively, such that from Hölder's inequality we can infer 
 \begin{align*}
  \int_0^t \norm[\Lom2]{(u(\cdot,s)\cdot\na)u(\cdot,s)}^p ds &\le 
   \int_0^T\norm[\Lom {p}]{u(\cdot,s)}^{p} \norm[\Lom{\f{2p}{p-2}}]{\na u(\cdot,s)}^{p} ds\\
   &\le C_{5 } \int_0^T \norm[\Lom2]{\na u(\cdot,s)}^{p}  \norm[\Lom{\f{2p}{p-2}}]{\na u(\cdot,s)}^p ds\\
   &\le C_{5 }C_{6  } \int_0^T \norm[\Lom2]{\na u(\cdot,s)}^{p}  \norm[\Lom2]{\Delta u(\cdot,s)}^2 \norm[\Lom2]{\na u(\cdot,s)}^{p-2} ds\\
   &\le C_{5 }C_{6  } \bigg(\sup_{t\in(0,T)} \norm[\Lom2]{\na u(\cdot,t)}^{2p-2}\bigg) \int_0^T \norm[\Lom2]{\Delta u(\cdot,s)}^2 ds. 
 \end{align*}
 Here an application of Lemma \ref{lem:nauAu} finishes the proof of \eqref{Aalpha;navier}, which by the embedding of $D(A^{\alpha_0})$ into $\Lom{\infty}$ entails \eqref{u;infty;navier}, too. 
\end{proof}

\section{Boundedness for $n$}\label{sec:boundednessforn}
%
%
%
%
The goal of this section will be to establish an $\Lom\infty$-estimate for $n$ by combination of previously obtained estimates and to finally prove Theorem \ref{mainthm}. Control of the cross-diffusion term in the equation for $n$ will be supplied by the following boundedness statement concerning $\na c$. 

%
\begin{lem}\label{lem;esti;nablav}
If $\kappa=1$, assume that $N=2$. 
Let $1\le p\leq q<\infty$ satisfy  
$q<\vartheta$ and $\frac{1}{2} + 
\frac{N}{2}(\frac{1}{p}-\frac{1}{q})<1$. 
Then for any finite $T\in(0,\tmax]$ there exists a constant $C(T)>0$ such that 
\begin{align}\label{estimate:nac:by:n}
  \lp{q}{\nabla c(\cdot,t)}
  \le 
  C(T) \bigg(
  1+\sup_{s\in (0,T)}\lp{p}{n(\cdot,s)}
  \bigg)\quad \mbox{for all}\ t\in (0,T).
\end{align}
In particular, if $\chi<\sqrt{\f 2N}$, then for any $q\in[1,\f1{\chi^2-\f1N})\cap[1,\vartheta)$ and any finite $T\in(0,\tmax]$ there is $C(q,T)>0$ such that 
\begin{equation}\label{bd:nablac}
 \norm[\Lom q]{\na c(\cdot,t)} \le C(q,T) \qquad \text{for any } t\in(0,T).
\end{equation}

\end{lem}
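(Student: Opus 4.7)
The plan is to view the second equation of \eqref{cp} as an inhomogeneous heat equation with source $n-u\cdot\nabla c$, write $c$ via the variation-of-constants formula
\[
 c(\cdot,t) = e^{t(\Delta-1)}c_0 + \int_0^t e^{(t-s)(\Delta-1)}\bigl(n(\cdot,s) - u(\cdot,s)\cdot\nabla c(\cdot,s)\bigr)\,ds
\]
(with $\{e^{t\Delta}\}_{t\ge 0}$ denoting the Neumann heat semigroup on $\Omega$), and then to combine standard $L^p$-$L^q$ smoothing estimates for $\nabla e^{t(\Delta-1)}$ with the $L^\infty$-bound on $u$ supplied by Lemma \ref{lem;esti;u} (for $\kappa=0$) and Lemma \ref{lem;esti;u;Navier} (for $\kappa=1$, $N=2$).

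Taking $\nabla$ and the $L^q$-norm, the initial data contribution is handled by the exponentially decaying bound $\|\nabla e^{t(\Delta-1)}c_0\|_{L^q(\Omega)} \le Ce^{-\lambda t}\|c_0\|_{W^{1,\vartheta}(\Omega)}$, for which $q<\vartheta$ is needed. The $n$-source contributes, through
\[
 \|\nabla e^{(t-s)(\Delta-1)}n(\cdot,s)\|_{L^q(\Omega)}\le C\bigl(1+(t-s)^{-\frac 12 - \frac N2(\frac 1p-\frac 1q)}\bigr)e^{-\lambda(t-s)}\|n(\cdot,s)\|_{L^p(\Omega)},
\]
a term proportional to $\sup_{s\in(0,T)}\|n(\cdot,s)\|_{L^p(\Omega)}$, where the assumption $\frac 12+\frac N2(\frac 1p-\frac 1q)<1$ is precisely what makes the resulting time integral over $(0,t)$ finite. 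For the convective contribution, using $p=q$ in the smoothing estimate together with $\|u\cdot\nabla c\|_{L^q}\le\|u\|_{L^\infty}\|\nabla c\|_{L^q}$ and the $L^\infty$-bound on $u$, one obtains
\[
 \|\nabla e^{(t-s)(\Delta-1)}(u\cdot\nabla c)(\cdot,s)\|_{L^q(\Omega)}\le C_T\bigl(1+(t-s)^{-\frac 12}\bigr)e^{-\lambda(t-s)}\|\nabla c(\cdot,s)\|_{L^q(\Omega)}.
\]
Setting $M(t):=\sup_{s\in(0,t)}\|\nabla c(\cdot,s)\|_{L^q(\Omega)}$, these estimates combine into a singular integral inequality
\[
 M(t)\le A(T)\Bigl(1+\sup_{s\in(0,T)}\|n(\cdot,s)\|_{L^p(\Omega)}\Bigr) + C_T\int_0^t (t-s)^{-\frac 12}M(s)\,ds, \qquad t\in(0,T).
\]
The main technical obstacle is the self-referential structure introduced by the transport term; however, since the kernel $(t-s)^{-1/2}$ is singular only in an integrable manner near $s=t$, a standard singular Grönwall lemma on the finite interval $(0,T)$ closes the argument and yields \eqref{estimate:nac:by:n}.

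For the \emph{in particular} assertion, it is enough to exhibit, for each admissible $q$, an exponent $p\in[1,\frac 1{\chi^2})$ with $p\le q$ and $\frac 12+\frac N2(\frac 1p-\frac 1q)<1$. Rearranged, the last inequality demands $p>\frac{Nq}{N+q}$, and the constraint $q<\frac 1{\chi^2-\frac 1N}$ (understood as vacuous when $\chi^2\le\frac 1N$) is exactly what makes the window $(\frac{Nq}{N+q},\frac 1{\chi^2})$ nonempty and hence allows for such a choice of $p$; the additional requirement $p\le q$ is compatible, since $\frac{Nq}{N+q}<q$ always holds. With this $p$, Lemma \ref{lem;Lp;n} supplies a uniform bound on $\sup_{s\in(0,T)}\|n(\cdot,s)\|_{L^p(\Omega)}$, upon which \eqref{estimate:nac:by:n} immediately yields \eqref{bd:nablac}.
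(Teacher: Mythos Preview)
Your argument is correct and follows the paper's outline up to the handling of the convective term $u\cdot\nabla c$, where you take a genuinely different route. The paper exploits the divergence-free condition to rewrite $u\cdot\nabla c=\nabla\cdot(cu)$, then estimates $\|e^{(t-s)(\Delta-1)}\nabla\cdot(cu)\|_{W^{1,q}}$ via fractional powers of $-\Delta+1$ together with the $L^r$-bound on $cu$ coming from Lemma~\ref{lem;q;c} and the $L^\infty$-bound on $u$; this produces a closed estimate with no self-reference. You instead keep the term as is, bound $\|u\cdot\nabla c\|_{L^q}\le\|u\|_{L^\infty}\|\nabla c\|_{L^q}$, and close the resulting Volterra-type inequality with a singular Gr\"onwall lemma (as in Henry, Lemma~7.1.1). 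Your route is somewhat more elementary in that it avoids both the fractional-power machinery and any appeal to Lemma~\ref{lem;q;c}; the price is the Gr\"onwall step, and the resulting constant $C(T)$ has slightly worse (Mittag--Leffler-type) dependence on $T$. One minor point: the integral inequality you display should first be written for $\|\nabla c(\cdot,t)\|_{L^q}$ rather than for $M(t)$, since $\sup_{t'\le t}\int_0^{t'}(t'-s)^{-1/2}M(s)\,ds$ need not be dominated by $\int_0^t(t-s)^{-1/2}M(s)\,ds$; the singular Gr\"onwall lemma applies directly to $t\mapsto\|\nabla c(\cdot,t)\|_{L^q}$ and yields a bound linear in the forcing $A(T)(1+\sup_s\|n(\cdot,s)\|_{L^p})$, after which taking the supremum gives \eqref{estimate:nac:by:n}. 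Your derivation of \eqref{bd:nablac} matches the paper's.
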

\begin{proof}
Applying the variation of constants formula for $c$, 
we have 
\begin{align}\label{nablac;semirep}
  c(\cdot,t) 
  &= e^{t(\Delta-1)}c_0 
  + \int_0^t e^{(t-s)(\Delta -1)}
  \big(n(\cdot,s)+u(\cdot,s)\cdot \nabla c(\cdot,s)
  \big)\,ds, \qquad t\in(0,\tmax). 
\end{align}
In light of standard semigroup estimates for the Neumann 
heat semigroup 
(e.g. \cite[Lemma 1.3 (iii)]{win_aggregationvs}) we find $C_1>0$ such that 
\begin{align}\label{nablac;ini}
  \lp{q}{\nabla e^{t(\Delta-1)}c_0}
  \le C_1\lp{\vartheta}{\nabla c_0} \quad \text{for all } t\in(0,\tmax).
\end{align}
Similarly, the semigroup estimates of \cite[Lemma 1.3 (ii)]{win_aggregationvs} provide us with $C_2>0$, for any $t\in(0,T)$ fulfilling 
\begin{align}\notag
  \int_0^t 
  \lp{q}{e^{(t-s)(\Delta -1)}n(\cdot,s)}\,ds
  &\le C_2\int_0^t 
  \left(
  1+(t-s)^{-\frac{1}{2}-\frac{N}{2}(\frac{1}{p}-\frac{1}{q})}
  \right)e^{-(t-s)}\lp{p}{n(\cdot,s)}\,ds 
\\\label{nablac;n}
  &\le C_2\sup_{s\in (0,T)}\lp{p}{n(\cdot,s)}
  \int_0^\infty 
  \left(
  1+\sigma^{-\frac{1}{2}-\frac{N}{2}
  (\frac{1}{p}-\frac{1}{q})}
  \right)e^{-\sigma}\,d\sigma,
\end{align} 
wherein the last integral is finite since $\frac{1}{2}+\frac{N}{2}(\frac{1}{p}-\frac{1}{q})<1$. 
Now we put $r\ge q$, 
$\eta<\frac{1}{2}$ satisfying 
$\frac{1}{2}+\frac{N}{2}
(\frac{1}{r}-\frac{1}{q})<\eta$ 
and $\delta \in (0,\frac{1}{2}-\eta)$. Since $2\eta - \f Nr > 1-\f Nq$, the domain $D((-\Delta+1)^\eta)$ of the fractional power of the operator $-\Delta+1$ is continuously embedded into $W^{1,q}(\Om)$ (see \cite[Theorem 1.6.1]{henry}) and we can hence find $C_3>0$ such that 
\[
 \norm[W^{1,q}(\Om)]{w} \le C_3 \norm[\Lom r]{(-\Delta + 1)^\eta w} \qquad \text{for any } w\in D((-\Delta+1)^\eta). 
\]
Aided by \cite[Lemma 2.1]{Horstmann-Winkler_2005}, 
we moreover fix $C_4>0$ such that 
\[
 \norm[\Lom r]{(-\Delta +1)^\eta e^{-\tau (-\Delta+1)} \na\cdot w}\le C_4\tau^{-\eta-\f12-\delta}e^{-\lambda \tau}\norm[\Lom r]{w} \quad\text{for all } \tau>0 \text{ and } w\in \Lom r.
\]
Additionally relying on $\nabla \cdot u =0$ in 
$\Omega\times (0,\tmax)$ we then find 
%
%
\begin{align}\notag
  \int_0^t
  &\lp{q}{\nabla e^{(t-s)(\Delta-1)} 
  u(\cdot,s)\cdot \nabla c(\cdot,s)}\,ds
\\\notag
  &\le \int_0^t\|e^{(t-s)(\Delta-1)}
  \nabla\cdot (c(\cdot,s)u(\cdot,s))\|_{W^{1,q}(\Omega)}\,ds
\\\notag
  &\le 
  C_3\int_0^t
  \lp{r}
  {(-\Delta +1)^{\eta}e^{(t-s)(\Delta-1)}
  \nabla\cdot (c(\cdot,s)u(\cdot,s))}\,ds
\\\label{nablac;convection1}
  &\le 
  C_3C_4\int_0^t
  (t-s)^{-\eta -\frac{1}{2}-\delta}
  \lp{r}{c(\cdot,s)u(\cdot,s)}\,ds\qquad\text{for all }t\in(0,\tmax).
\end{align}
Since by Lemma \ref{lem;q;c}, 
and Lemma \ref{lem;esti;u} or Lemma \ref{lem;esti;u;Navier} there is $C_5>0$ such that 
\begin{align*} 
  \lp{r}{c(\cdot,s)u(\cdot,s)}
  \le C_5 
  \quad 
  \mbox{for all}\ s\in (0,T),
\end{align*} 
we can conclude from $\eta+\f12+\delta<1$ and \eqref{nablac;convection1} that with $C_6:=C_3C_4C_5\int_0^T \sigma^{-\eta-\f12-\delta} d\sigma \in(0,\infty) $ we have 
\begin{align}\label{nablac;convection2}
  \int_0^t
  &\lp{q}{\nabla e^{(t-s)(\Delta-1)} 
  u(\cdot,s)\cdot \nabla c(\cdot,s)}\,ds
\le C_6 \qquad \text{for all } t\in(0,T).
\end{align}
Combination of 
\eqref{nablac;semirep}, \eqref{nablac;ini}, 
\eqref{nablac;n} and \eqref{nablac;convection2} 
establishes the asserted inequality \eqref{estimate:nac:by:n}.\\
If $q<\f1{\chi^2-\f1N}$ and hence $1>\f12+\f N2(\chi^2-\f1q)$, it is possible to choose $p\in(\f N2,\f1{\chi^2})\cap(\frac{N}{2},q]$ such that still $1>\f12 + \f N2(\f1p-\f1q)$ and \eqref{bd:nablac} results from \eqref{estimate:nac:by:n} and Lemma \ref{lem;Lp;n}.
\end{proof}

%
%
%
%
Now we shall establish a temporally local $\Lom\infty$-estimate for $n$. 
\begin{lem}\label{lem;Linfty;n}
Assume that $\chi<\sqrt{\f2N}$. 
If $\kappa=1$, additionally suppose that $N=2$. 
Then for any finite $T\in(0,\tmax]$ there exists a constant $C(T)>0$ 
satisfying 
\begin{align*}
  \lp{\infty}{n(\cdot,t)} 
  \le C(T)
  \quad \mbox{for all}\ t\in(0,T). 
\end{align*} 
\end{lem}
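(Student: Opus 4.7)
The plan is to upgrade the $\Lom p$-integrability of $n$ provided by Lemma \ref{lem;Lp;n} to an $\Lom\infty$-bound via the smoothing properties of the Neumann heat semigroup. Because $\nabla\cdot u=0$, I can rewrite the first equation in \eqref{cp} in pure divergence form and apply variation of constants to get
\[
 n(\cdot,t) = e^{t\Delta}n_0 - \int_0^t e^{(t-s)\Delta}\nabla\cdot\Bigl(\chi\tfrac{n}{c}\nabla c+nu\Bigr)(\cdot,s)\,ds,\quad t\in(0,\tmax).
\]
The quantity $M(t):=\sup_{s\in(0,t)}\|n(\cdot,s)\|_{\Lom\infty}$ is finite for every $t<\tmax$ by the continuity statement in Lemma \ref{lem;local existence}, so the aim is to control $M(T)$ uniformly by a constant depending only on $T$.

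To feed this scheme I collect the following uniform bounds on $(0,T)$: first, $\|n(\cdot,t)\|_{\Lom{p_0}}\le C(T)$ for some fixed $p_0\in(\tfrac N2,\tfrac{1}{\chi^2})$ with $p_0<N$, from Lemma \ref{lem;Lp;n}; second, $\|\nabla c(\cdot,t)\|_{\Lom{q_0}}\le C(T)$ for some fixed $q_0\in(N,\vartheta)$, which is legitimate by \eqref{bd:nablac} because $\chi<\sqrt{2/N}$ entails $\tfrac{1}{\chi^2-1/N}>N$ (with the convention that this is $+\infty$ if $\chi^2\le\tfrac 1N$); third, $\|u(\cdot,t)\|_{\Lom\infty}\le C(T)$ from Lemma \ref{lem;esti;u} / Lemma \ref{lem;esti;u;Navier}; and fourth, the positive lower bound $c(\cdot,t)\ge c_*(T):=(\min_{\Om}c_0)e^{-T}>0$ from \eqref{ineq;lower;c}.

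Passing to $\Lom\infty$-norms in the Duhamel formula and invoking the standard estimate
\[
 \|e^{\tau\Delta}\nabla\cdot w\|_{\Lom\infty}\le C\bigl(1+\tau^{-\frac12-\frac{N}{2r}}\bigr)\|w\|_{\Lom r},\quad \tau\in(0,T),
\]
for some choice of $r\in(N,q_0)$, H\"older's inequality yields
\[
 \Bigl\|\chi\tfrac{n}{c}\nabla c+nu\Bigr\|_{\Lom r}\le\tfrac{\chi}{c_*(T)}\|n\|_{\Lom{r_1}}\|\nabla c\|_{\Lom{q_0}}+\|u\|_{\Lom\infty}\|n\|_{\Lom r}
\]
with $\tfrac1{r_1}=\tfrac1r-\tfrac1{q_0}$. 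Choosing $r$ slightly above $N$ forces $r_1$ close to $\tfrac{Nq_0}{q_0-N}$, and the constraints $p_0<N<r<q_0$ ensure $r,r_1>p_0$; interpolating $L^{r_1}$ and $L^r$ between $L^{p_0}$ and $L^\infty$ with exponents $\theta_i=p_0/r_i\in(0,1)$, exploiting the monotonicity $M(s)\le M(t)$, and using the finiteness of $\int_0^T\sigma^{-\frac12-\frac{N}{2r}}d\sigma$ (guaranteed by $r>N$), I arrive at a self-improving inequality of the shape
\[
 M(t)\le C_1(T)+C_2(T)\bigl(M(t)^{1-\theta_1}+M(t)^{1-\theta_2}\bigr),\quad t\in(0,T).
\]
Since both exponents $1-\theta_i$ lie strictly below $1$, Young's inequality turns this into $M(T)\le C(T)$.

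The principal obstacle is that when $\chi^2$ is close to the threshold $\tfrac 2N$, Lemma \ref{lem;Lp;n} only supplies $p_0$ marginally above $\tfrac N2$ while \eqref{bd:nablac} only supplies $q_0$ marginally above $N$, so a naive H\"older application to $\tfrac{n}{c}\nabla c$ using only the a priori bounds would land in $\Lom r$ with $r$ as small as $\tfrac N3$ and thereby invalidate the $\nabla\cdot$-estimate of the semigroup. The interpolation against the unknown $M(t)$ with sub-unit exponents, subsequently absorbed via Young's inequality, is the device that circumvents this obstruction without any separate iterative bootstrap on the $\Lom p$-integrability of $n$.
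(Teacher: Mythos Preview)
Your proposal is correct and follows essentially the same approach as the paper: Duhamel representation of $n$, the semigroup estimate $\|e^{\tau\Delta}\nabla\cdot w\|_{\Lom\infty}\le C(1+\tau^{-\frac12-\frac{N}{2r}})\|w\|_{\Lom r}$ with some $r>N$, H\"older splitting of $\tfrac{n}{c}\nabla c$ and $nu$, interpolation of the resulting $\Lom{r_i}$-norms of $n$ against the unknown supremum $M$, and absorption thanks to the sub-unit exponents. The only cosmetic differences are that the paper interpolates between $\Lom1$ (mass conservation) and $\Lom\infty$ rather than between $\Lom{p_0}$ and $\Lom\infty$, and that it uses the shift $t_0=(t-1)_+$ to keep the time integral of length at most one, whereas you integrate from $0$; on a finite interval $(0,T)$ with $r>N$ both choices lead to the same conclusion.
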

\begin{proof}
From \eqref{ineq;lower;c} and \eqref{id;mass} we obtain $C_1>0$ and $C_2>0$ such that 
\begin{equation}\label{def:c1c2}
 \inf_{x\in \Om} c(x,t) \ge \f1{C_1} \qquad \text{and} \qquad \io n(\cdot,t)=C_2\qquad \text{for all } t\in(0,T).
\end{equation}
We pick $q,r\in(1,\vartheta)$ such that 
\[
 \f1{\chi^2-\f1N} >q >r >N
\]
and let $C_3>0$ fulfil 
\begin{equation}\label{def:c3}
 \norm[\Lom q]{\na c(\cdot,t)} \le C_3 \qquad \text{for all } t\in (0,T)
\end{equation}
by Lemma \ref{lem;esti;nablav}.
Now for all $T'\in(0,T)$ we note that 
\begin{align*}
M(T'):=\sup_{t\in (0,T')}\lp{\infty}{n(\cdot,t)} 
\end{align*}
is finite and that, furthermore, for any $p\in[1,\infty)$,
\begin{equation}\label{interpolate}
 \norm[\Lom p]{n(\cdot,t)}\le C_2^{\f1p} \kl{M(T')}^{1-\f1p} \qquad \text
{for all } t\in(0,T'). 
\end{equation}
In order to obtain an estimate for $M(T')$, 
for $t\in (0,T')$, we set $t_0:=(t-1)_+$ and 
represent $n$ according to 
\begin{align*}
  n(\cdot,t)
  &=e^{(t-t_0)\Delta}n(\cdot,t_0) 
  - \int_{t_0}^t e^{(t-s)\Delta}\nabla\cdot 
  \left(\chi \frac{n(\cdot,s)}{c(\cdot,s)}
  \nabla c(\cdot,s) + n(\cdot,s)u(\cdot,s) \right)\,ds. 
\end{align*}
If $t_0=0$ (that is, $t\le 1$), then 
\[
 \norm[\Lom\infty]{e^{(t-t_0)\Delta}n(\cdot,t_0)} = \norm[\Lom\infty]{e^{t\Delta} n_0} \le \norm[\Lom\infty]{n_0}, 
\]
whereas, if $t_0>0$ (i.e. $t>1$), then with the constant $C_4>0$ yielded by the semigroup estimate \cite[Lemma 1.3 i)]{win_aggregationvs} we have 
\[
 \norm[\Lom\infty]{e^{(t-t_0)\Delta}n(\cdot,t_0)} \le C_4\Big(1+(t-t_0)^{-\f N2}\Big) \norm[\Lom1]{n(\cdot,t_0)}=2C_2C_4, 
\]
because $t-t_0=1$. With $C_5:=\max\{\norm[\Lom\infty]{n_0}, 2C_2C_4\}$ and some $C_6>0$ obtained from the semigroup estimate in \cite[Lemma 2.1 iv)]{caolan16}, we have 
\begin{equation}\label{someestimate}
 \norm[\Lom\infty]{n(\cdot,t)}\le C_5 + C_6 \int_0^1 \kl{1+(t-s)^{-\f12-\f N{2r}}} \norm[\Lom r]{\left(\f n c \na c\right) (\cdot,s) - (nu)(\cdot,s)} ds
\end{equation}
for all $t\in(0,\tmax)$.
The definitions of $C_1$ and $C_3$ in \eqref{def:c1c2} and \eqref{def:c3} together with Hölder's inequality and \eqref{interpolate} imply 
\[
 \norm[\Lom r]{\kl{\f nc \na c}(\cdot,t)} \le C_1 \norm[\Lom{\f{rq}{q-r}}]{n(\cdot,t)}\norm[\Lom q]{\na c(\cdot,t)} \le C_1C_2^{\f1r-\f1q}C_3(M(T'))^{1+\f1q-\f1r} 
\]
for all $t\in(0,T')$ 
and with $C_6$ obtained from either Lemma \ref{lem;esti;u} or Lemma \ref{lem;esti;u;Navier}
\[
 \norm[\Lom r]{n(\cdot,t)u(\cdot,t)}\le C_2^{\f1r}C_6(M(T'))^{1-\f1r}, \qquad \text{for } t\in(0,T').
\]
Estimate \eqref{someestimate} is hence transformed into 
\[
 \norm[\Lom\infty]{n(\cdot,t)} \le C_4 + C_7 (M(T'))^{1+\f1q-\f1r} + C_7 (M(T'))^{1-\f1r} \quad \text{for all } t\in(0,T'),
\]
where $C_7=C_5\int_0^1 \sigma^{-\f12-\f N{2r}} d\sigma \max\{C_1C_2^{\f1r-\f1q}C_3, C_2^{\f1r}C_6\}$ is finite due to $r>N$. 
Accordingly, for any $T'\in(0,T)$ we have that 
\[
 M(T')\le \sup\left\{\xi\in\R\mid \xi\le C_4+C_7\xi^{1+\f1q-\f1r}+C_7\xi^{1-\f1r}\right\},
\]
which is a finite number due to $1+\f1q-\f1r<1$ and $1-\f1r<1$.
This concludes the proof. 
\end{proof}

%
%
%
%
\begin{proof}[{\bf Proof of Theorem \ref{mainthm}}]
With $\alpha_0\in(\f N4,\alpha]$ satisfying $\alpha_0<1-\f N2 \chi^2 +\f N4$, we may view $u_0$ as an element of $D(A^{\alpha_0})$ and apply Lemma \ref{lem;local existence} so as to obtain a solution that either exists globally or satisfies 
\begin{equation}\label{extend:lastproof}
 \lim_{t\to \tmax}
  \left(
  \lp{\infty}{n(\cdot,t)}
  + \|c(\cdot,t)\|_{W^{1,\vartheta}(\Omega)}
  +\lp{2}{A^{\alpha_0} u(\cdot,t)}
  \right)=\infty. 
\end{equation}
If $\tmax$ were finite, we could apply Lemma \ref{lem;esti;u} or Lemma \ref{lem;esti;u;Navier} and Lemma \ref{lem;Linfty;n} with $T=\tmax$ so as to see that $\lp{\infty}{n(\cdot,t)}+\lp{2}{A^{\alpha_0} u(\cdot,t)}$ were bounded on $(0,\tmax)$. Combining the boundedness of $n$ with Lemma \ref{lem;esti;nablav} and invoking Lemma \ref{lem;q;c}, again for $T=\tmax$, then would give rise to a contradiction to \eqref{extend:lastproof}. Therefore, $\tmax=\infty$.
\end{proof}

\section{Acknowledgement}
T.B. and J.L. acknowledge support of the {\em Deutsche Forschungsgemeinschaft}  within the project {\em Analysis of chemotactic cross-diffusion in complex frameworks}.
M.M. is funded by JSPS Research 
Fellowships for Young Scientists (No. 17J00101). 
A major part of this work was written while M.M. 
visited Universit\"at Paderborn under the support 
from Tokyo University of Science.


{\footnotesize

\def\cprime{$'$}

}
\end{document}